\setlist{  
	listparindent=1.5em}
\newcounter{quote}
\small\textit{\BODY}}%
\newtheorem{thm}{Theorem}
\newtheorem{theorem}{Theorem}[section]
\newtheorem{cor}[theorem]{Corollary}
\newtheorem{lemma}[theorem]{Lemma}
\theoremstyle{definition}
\theoremstyle{remark}
\newcommand{\id}{\mathrm{id}}
\newcommand{\aut}{\operatorname{Aut}}
\newcommand{\mc}{\mathfrak{c}}
\begin{document}

  \title{On the Automorphism Group of Token Graphs of Complete Bipartite Graphs}
  \author{Ruy Fabila-Monroy \thanks{Departamento de Matemáticas, Cinvestav. Supported by CONACYT FORDECYT-PRONACES/39570/2020.  {\tt  ruyfabila@math.cinvestav.edu.mx}}  \and
Ana Laura Trujillo-Negrete \thanks{Centro de Modelamiento Matemático (CNRS IRL2807), Universidad de Chile, Santiago, Chile. Supported by CONACYT(Mexico), Convocatoria 2021 de Estancias Posdoctorales por M\'exico en Apoyo por
	SARS-CoV-2(COVID-19), by ANID/Fondecyt Postdoctorado 3220838, and by ANID Basal Grant CMM FB210005. {\tt ltrujillo@dim.uchile.cl} }}

\maketitle

\begin{abstract}
Let $G$ be a graph of order $n$ and let $k\in \{1,2,\ldots,n-1\}$. 
	The $k$-token graph of $G$ is the graph, $F_k(G)$, whose vertices are all the $k$-subsets of vertices
	of $G$, where two such $k$-sets are adjacent whenever their symmetric difference is an edge of $G$. 
	In this paper, we determine the automorphism group of the $k$-token graph of the complete bipartite graph
	$K_{m,n}$. 

\end{abstract}

\section{Introduction}

Throughout this paper, $G$ is a simple graph of order $n\geq 2$ and $k\in \{1,2,\ldots,n-1\}$. 
The \emph{$k$-token graph}  of $G$ is the graph, $F_k(G)$, whose vertices
are all the $k$-subsets of vertices of $G$; where two vertices $A$ and $B$ of $F_k(G)$ are adjacent whenever their
symmetric difference, $A \triangle B$, is a pair of adjacent vertices in $G$. See Figure~\ref{fig:example_tokengraph} for an example.
As far as we know, Token graphs have been defined, independently (under different names), at least four times \cite{distance,double_1,rudolph,ruy_tokens}. The name ``Token graphs'' was given in \cite{ruy_tokens}, and
it is inspired  by the following interpretation. Suppose that $k$
indistinguishable tokens are placed on the vertices of $G$ with at most one token per vertex.
Form a new graph, $F_k(G)$,
whose vertices are all the possible token configurations, where two token configurations are adjacent
if one can be obtained from the other by taking a token and sliding it along an edge to an unoccupied vertex. We use this interpretation throughout this paper. 

\begin{figure}[t]
	\centering 
	\includegraphics[width=0.55\textwidth]{./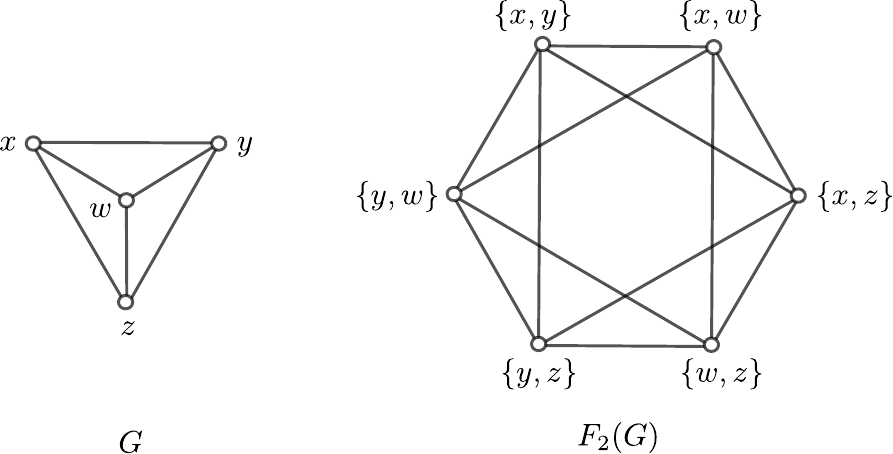}
	\caption{A graph $G$ and its $2$-token graph $F_2(G)$.}
	\label{fig:example_tokengraph}
\end{figure}

Token graphs have several connections with other combinatorial objects. The $k$-token graph of the complete graph, $K_n$, is precisely the Johnson graph, $J(n,k)$; the Johnson graph has been widely studied due to connections with coding theory \cite{etzion,ganesan}. Token graphs have been used to model some physical phenomena, see for example \cite{godsil,alzaga,yingkai,barghi,rudolph}; in error correcting codes, the packing number of the $2$-token graph of the path graph of $n$ vertices, corresponds to the size of the largest code of length $n$ and constant weight $2$ that can correct a single adjacent transposition, see \cite{gomez}; Token graphs also have connections with some reconfiguration problems, see \cite{demaine,yama}.

An \emph{automorphism} of $G$ is a bijection $\varphi: V(G) \to V(G)$ such that $x$ is adjacent
to $y$ if and only if $\varphi(x)$ is adjacent to $\varphi(y)$. The set of all automorphisms of $G$
forms a group under function composition; we denote it with $\aut(G)$.
The study of combinatorial and algebraic properties of Token graphs has often followed the following approach.

\begin{myquote} Given a graph invariant on $G$, what can be said about the same graph invariant on $F_k(G)$? 
\end{myquote}

\noindent See, for example, \cite{double_1,token2,ruy_tokens,trees_connectivity,jacob,leanos2019edgeconnectivity,leanos2018connectivity,Rivera2018}. In this paper, we follow this approach with regards to the automorphism group of a graph. 

Let $\varphi \in \aut(G)$, and let $\iota(\varphi)$ be the function
that maps every $A \in V(F_k(G))$ to \[\iota(\varphi)(A):=\{\varphi(v) : v \in A\}.\]
In~\cite{rivera-ibarra}, it is shown that $\iota$ is an injective group homomorphism.
Thus, 
\begin{equation}\label{eq:G<F}
	\aut(G) \le \aut(F_k(G)).
\end{equation}

Let $\mathfrak{c}$ be the map that sends
every set of $k$ vertices of $G$ to its complement in $V(G)$. This
map is an isomorphism from $F_k(G)$ to $F_{n-k}(G)$. If $k=n/2$, then
$\mathfrak{c}$ is an automorphism of $F_k(G)$, which we call the 
\emph{complement automorphism}. When $k=n/2$, we have that
$\mathfrak{c} \notin \iota(\aut(G))$~\cite{rec}, and
\[\iota(\varphi)\circ \mathfrak{c}=\mathfrak{c}\circ \iota(\varphi),\]
for all $\varphi \in \aut(G)$. Thus, in the case when $k=n/2$,
we have that
\begin{equation} \label{eq:G<F_n/2}
	\aut(G)\times \mathbb{Z}_2 \le \aut (F_k(G)).
\end{equation}

There are examples where inequalities (\ref{eq:G<F}) and  (\ref{eq:G<F_n/2}) are
tight and examples where they are strict. 
When either of these inequalities is tight it has been shown~\cite{rec} that a graph $H\simeq F_k(G)$ can be
described as the token graph of $G$ in a unique way (modulo automorphisms of $G$).
In this case we say that $F_k(G)$
is \emph{uniquely reconstructible as the $k$-token graph} of $G$. 
So far, the families of graphs for which $Aut(F_k(G))$ has been studied are:
the complete graph, see \cite{automorphism_johnsongraph, ganesan, ramras}, the path graph for $k\neq n/2$, cycle, star, fan and wheel graphs for $k=2$ \cite{rivera-ibarra}, and
($C_4$, Diamond)-free connected graphs~\cite{rec}. For all of these families either (\ref{eq:G<F}) or  (\ref{eq:G<F_n/2})
is tight. In this paper, we study the automorphism group of token graphs of complete bipartite graphs.
Specifically, we prove that $F_k(K_{m,n})$ is uniquely reconstructible if and only if $m \ne 2$ or $k\in \{1,m+n-1\}$.

Given a set $S$, let $2^S$ be denote the family of all subsets of $S$. Thus, $|2^S|=2^{|S|}$.
Let $[n]:=\{1,\dots,n\}$, and let $\binom{[n]}{r}$ be the set of all subsets of $[n]$ of $r$ elements. 
The symmetric group $S_n$ acts on $\binom{[n]}{r}$ as follows. For a permutation $\pi \in S_n$ and $X \in \binom{[n]}{r}$, let $\pi(X):=\{\pi(x): x \in X\}$. In what follows we regard $\binom{[n]}{r}$ as an $S_n$-set with this action. 
Our main result is the computation of the automorphism group of tokens graphs of complete bipartite graphs.
\begin{thm}\label{thm:bipartite} 
	Let $m$ and $n$ be positive integers with $m \le n$. Then
	\ {}
	\begin{itemize}
		\item[a)]\[\aut(F_2(K_{2,2}))=\mathbb{Z}_2 \times S_4;\]
		\item[b)] for all $n >2$ and $1 < k< n+1$,
		\[ \aut(F_k(K_{2,n})) = \begin{cases} 
			\mathbb{Z}_2 \wr_{\binom{[n]}{k-1}} S_n & \textrm{ if } k\neq \frac{n+2}{2}, \\
			\mathbb{Z}_2 \wr_{\binom{[n]}{k-1}} (S_n\times \mathbb{Z}_2)  & \textrm{ otherwise; and} \\
		\end{cases}
		\]

		\item[c)] for all $m \neq 2$,
		\[ \aut(F_k(K_{m,n}))=\begin{cases}
			\aut(K_{m,n}) & \textrm{ if } k\neq \frac{n+m}{2}, \\
			\aut(K_{m,n}) \times \mathbb{Z}_2  & \textrm{ otherwise.}
		\end{cases}\]

	\end{itemize}
	
\end{thm}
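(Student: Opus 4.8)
The plan is to dispatch part~(a) by hand and to reduce parts~(b) and~(c) to the automorphism groups of a few Johnson‑type graphs. For~(a): $K_{2,2}=C_4$, and in $F_2(C_4)$ the two ``diagonal'' pairs of $C_4$ each have the four ``edge'' pairs as neighbours and are non‑adjacent, while the edge pairs are independent; hence $F_2(K_{2,2})\simeq K_{2,4}$ and $\aut(F_2(K_{2,2}))=\aut(K_{2,4})=\mathbb{Z}_2\times S_4$.

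For (b) and (c), write the colour classes of $K_{m,n}$ as $U$ ($|U|=m$) and $W$ ($|W|=n$), record a $k$‑subset as a pair $(P,Q)$ with $P\subseteq U$, $Q\subseteq W$, $|P|+|Q|=k$, and set $\ell(P,Q):=|P|$, the \emph{level}. Since $A\triangle B$ is an edge of $K_{m,n}$ exactly when it is one vertex of $U$ and one of $W$, two vertices of $F_k(K_{m,n})$ are adjacent iff one is obtained from the other by deleting an element of $P$ and adding one of $Q$ (or vice versa); thus edges join only consecutive levels and $F_k(K_{m,n})$ is bipartite. Replacing $k$ by $m+n-k$ via $\mathfrak{c}\colon F_k\to F_{m+n-k}$ if needed, we may assume $k\le(m+n)/2$, so $k\le n$ and the bottom level is $L_0\simeq\binom{W}{k}$. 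The degree of a level‑$\ell$ vertex is $\ell(n-k+\ell)+(k-\ell)(m-\ell)$, a strictly convex function of $\ell$; hence the maximum‑degree vertices form the extreme level(s), and every automorphism preserves their union. A direct check shows the two extreme levels have equal size iff $k=(m+n)/2$ when $m<n$, while for $m=n$ they always have equal size but the level reversal is realised by the colour‑class swap $\sigma\in\aut(K_{n,n})$. Consequently, when $k\neq(m+n)/2$ every automorphism preserves each level, and when $k=(m+n)/2$ the complement automorphism $\mathfrak{c}$ reverses the levels and commutes with $\iota(\aut(K_{m,n}))$; so in all cases it remains to determine the \emph{level‑preserving} automorphisms.

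For part~(c) assume $m\neq 2$. First, $F_k(K_{m,n})$ has no two non‑adjacent vertices with equal neighbourhoods: the $W$‑coordinate of a vertex is the union of the $W$‑coordinates of its up‑neighbours and its $U$‑coordinate is the union of the $U$‑coordinates of its down‑neighbours, which (using $k\le(m+n)/2$) recovers the vertex whenever $m\neq 2$. Next, for level‑preserving $\varphi$, two vertices of $L_0$ have a common neighbour (necessarily in $L_1$) iff their symmetric difference has size $2$, so $\varphi|_{L_0}\in\aut(J(n,k))$; the complementation automorphism of $J(n,k)$ does not extend to $F_k(K_{m,n})$ (the $L_1$‑neighbourhoods become incompatible), so $\varphi|_{L_0}=\iota(\pi)|_{L_0}$ for some $\pi\in S_n$, and composing with $\iota(\pi)^{-1}$ (and, if $m=n$, with the colour‑class swap) we may assume $\varphi$ fixes $L_0$ pointwise. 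A level‑by‑level induction then forces $\varphi=\id$: the $L_0$‑part of the neighbourhood of a level‑$1$ vertex recovers its $W$‑coordinate, its $U$‑coordinate is the unique point lying in all $U$‑coordinates of its neighbours in $L_2$ (here $m\ge 3$ is used), and the same bookkeeping, comparing each level with the two adjacent ones, propagates upward. Hence $\aut(F_k(K_{m,n}))=\iota(\aut(K_{m,n}))$, and $\iota(\aut(K_{m,n}))\times\langle\mathfrak{c}\rangle$ when $k=(m+n)/2$.

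For part~(b), $m=2$ and $n>2$: by the previous paragraph's criterion the non‑adjacent pairs with equal neighbourhoods are exactly $\{(\{a_1\},S),(\{a_2\},S)\}$ with $S\in\binom{[n]}{k-1}$ (equivalently $\binom{[n]}{n+1-k}$, an isomorphic $S_n$‑set), and each transposition $\tau_S$ swapping such a pair is an automorphism. The $\tau_S$ generate a normal elementary abelian subgroup $N\cong\mathbb{Z}_2^{\binom{[n]}{k-1}}$ (the automorphisms fixing each twin class setwise), and $\aut(F_k(K_{2,n}))/N$ is the automorphism group of the graph $\widehat F$ obtained by identifying each twin pair --- which is precisely the Hasse diagram on the three consecutive levels $k-2,k-1,k$ of $2^{[n]}$. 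Since $\widehat F$ is bipartite with the middle level $\binom{[n]}{k-1}$ on one side and $\binom{[n]}{k-2}\cup\binom{[n]}{k}$ on the other, and for $n>2$ these have different sizes, every automorphism fixes the middle level setwise; there the ``two common neighbours'' relation is the adjacency of $J(n,k-1)$, and checking that restriction to the middle level is injective and determining which of its automorphisms extend yields $\aut(\widehat F)=S_n$, and $S_n\times\mathbb{Z}_2$ exactly when $2(k-1)=n$, i.e.\ $k=(n+2)/2$, the extra involution being the level swap. Each of these lifts to $F_k(K_{2,n})$ ($S_n$ through $\iota$ of permutations of $\{b_1,\dots,b_n\}$, the level swap through $\mathfrak{c}$), the extension splits, and $S_n$ acts on $N$ precisely through its natural action on $\binom{[n]}{k-1}$; assembling, $\aut(F_k(K_{2,n}))$ is the group in the statement, $\mathbb{Z}_2\wr_{\binom{[n]}{k-1}}S_n$, with the extra $\mathbb{Z}_2$ exactly when $k=(n+2)/2$.

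The main obstacle is the final induction in part~(c): showing that a level‑preserving automorphism fixing $L_0$ pointwise is the identity. This is exactly where the hypothesis $m\neq 2$ is essential --- for $m=2$ the fibre of $L_1$ over a fixed $W$‑coordinate consists of two mutually twin vertices, the source of $N$ and of the wreath product, whereas for $m\ge 3$ the richer adjacency between successive levels removes this freedom; a secondary point requiring care is the bookkeeping that separates the genuine level reversals (coming from $\mathfrak{c}$, or from the colour‑class swap when $m=n$) from spurious ones.
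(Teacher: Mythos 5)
Your part~(a) coincides with the paper's, and your part~(b) takes a genuinely different and reasonable route (quotienting by the twin classes in $H_1$ to reduce to the Hasse diagram on levels $k-2,k-1,k$ and quoting $\aut(J(n,k-1))$, where the paper instead constructs the automorphisms $\varphi_\alpha$, $\psi_\pi$ directly and matches a counting upper bound). But part~(c) has a genuine error at exactly the step you flag as the main obstacle. You claim that a level-preserving automorphism $\varphi$ that fixes $L_0$ pointwise must be the identity when $m\ge 3$. This is false, and it contradicts the very group you are trying to obtain: for any nontrivial $\rho\in S_m$ acting on the side $U=X$ and fixing $W=Y$ pointwise, the automorphism $\iota(\rho)\in\iota(\aut(K_{m,n}))$ preserves every level, fixes every vertex of $L_0$ (these are $k$-subsets of $W$), and is not the identity on $L_1$. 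So the pointwise stabilizer of $L_0$ among level-preserving automorphisms contains a copy of $S_m$; were your induction correct, you would conclude $\aut(F_k(K_{m,n}))\simeq S_n$ (up to the reversal factors) rather than $\iota(\aut(K_{m,n}))\supseteq \iota(S_m\times S_n)$. The flaw in the sketch is the step ``its $U$-coordinate is the unique point lying in all $U$-coordinates of its neighbours in $L_2$'': in an upward induction the images of the $L_2$-vertices are not yet determined when you process $L_1$, so this is circular; and the freedom that survives is not a twin-swap (indeed there are no twins for $m\ge 3$) but a single global permutation of the $U$-side, invisible at level $0$.

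What is missing is precisely the content of the paper's Lemma~\ref{lem:m>2} and Lemma~\ref{lem:m>2_A}: one must show, via the common-neighbour counts between $H_1$ and $H_0$, $H_2$ (resp.\ $H_{r-1}$ and $H_r$, $H_{r-2}$ in the reversal cases), that $\varphi$ permutes the blocks $X(1,1),\dots,X(1,m)$ of $H_1$ according to some $\sigma_\varphi\in S_m$, so that the image of each $H_1$-vertex is determined by the pair $(\pi_\varphi,\sigma_\varphi)$ together with the level-reversal data; only then does the propagation to higher levels (the paper's Lemma~\ref{lem:H_i}, which your ``bookkeeping'' essentially reproduces) apply, and the resulting count $m!\cdot n!$ (times the reversal factors) matches $|\aut(K_{m,n})|$ as in Lemma~\ref{lem:bipartite_upper_bound}. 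Equivalently, in your normalization language, you must prove that the pointwise stabilizer of $L_0$ is exactly $\iota(S_m)$, not trivial, and this requires the level-$1$ analysis you skipped. Your identification of where $m\neq 2$ enters is also off for the same reason: $m\ge 3$ does not ``remove the freedom'' above $L_0$; it only prevents the freedom from being independent over the fibres (a wreath product) instead of a single diagonal $S_m$.
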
\qed

We note that
\[
Aut(K_{m,n}) = \begin{cases}
	S_m \times S_n & \text{if $m\neq n$,} \\
	S_m \times S_n \rtimes \mathbb{Z}_2 & \text{if $m=n$.}
\end{cases}
\]
Indeed, the automorphisms of $K_{m,n}$ are generated by permutations within each vertex class, resulting in the group $S_m \times S_n$ when $m \neq n$. When $m = n$, there is also an automorphism that swaps the vertex classes, generating a subgroup isomorphic to $\mathbb{Z}_2$. Thus, in this case, the automorphism group is $S_m \times S_n \rtimes \mathbb{Z}_2$.

As a result, Theorem~\ref{thm:bipartite} implies that
\[|\aut(F_k(K_{2,n}))|=\begin{cases}
	2^{\binom{n}{k-1}-1}|\aut(K_{2,n})| & \text{ if $k \neq \frac{n+2}{2}$, and } \\[10pt]
	2^{\binom{n}{k-1}-1}|\aut(K_{2,n})\times \mathbb{Z}_2| & \text{ otherwise.}
\end{cases}\]
The particular cases of  $F_2(K_{n,n})$, $F_k(K_{1,n})$ and $F_{(n+2)/2}(K_{2,n})$ in Theorem~\ref{thm:bipartite},
were proven recently in~\cite{transitive}.

Before continuing,  we provide some  notation used throughout this paper. Given a graph $H$ and a vertex
$v\in H$, let $N(v)$ and $\deg(v)$ denote the neighborhood and degree of $v$ (in $H$), respectively. Given two vertices $u,v\in V(H)$, let $d(u,v)$ denote
the distance between vertices $u$ and $v$. 

\section{Overview}

The proof begins by establishing a lower bound on the number of automorphisms in Section~\ref{section:inclusions}. In Section~\ref{section:equality}, we show that this lower bound is, in fact, the exact number of automorphisms, thereby providing both a lower and an upper bound. This structure allows us to fully characterize the automorphisms of $F_k(K_{m,n})$.

In Section~\ref{section:inclusions}, we exhibit certain subgroups of $\aut(F_k(K_{m,n}))$. Specifically, when~$m \neq 2$, these subgroups are generated by the induced automorphisms of $K_{m,n}$, and, additionally, by the complement automorphism when $k=\frac{n+m}{2}$. For $m = 2$, in addition to the induced and complement automorphisms (when applicable), we identify new automorphisms of $F_k(K_{2,n})$. We also show that the subgroup generated by these new automorphisms, together with the induced and complement automorphisms (for~$k = \frac{n+2}{2}$), forms the group described in Theorem~\ref{thm:bipartite}.

In Section~\ref{section:equality}, we prove that the automorphisms identified in Section~\ref{section:inclusions} are the only automorphisms that exist, thus providing an exact upper bound matching the lower bound. Let $X$ and $Y$ be the parts of $K_{m,n}$, with $m = |X| \leq |Y| = n$, and let $\varphi$ be a fixed automorphism of $F_k(K_{m,n})$. The vertex set of $F_k(K_{m,n})$ is partitioned into subsets ${H_1, \dots, H_r}$, where each $H_i$ consists of vertices with exactly $i$ tokens on $X$.

In Lemma~\ref{lemma:H_0-image}, we establish the action of $\varphi$ on ${H_1, \dots, H_r}$, proving that $\varphi$ must map each $H_i$ either to itself or, in specific cases, to $H_{r-i}$. Next, Lemmas~\ref{lem:PQ} and~\ref{lem:permutation_sigma} characterize how $\varphi$ acts on the vertices of $H_0$. Corollary~\ref{cor:H_0-labelling} then summarizes these results, detailing the image of each vertex in $H_0$ based on certain permutations, thus specifying the number of possibilities for $\varphi(H_0)$.

We continue by analyzing the image of vertices in $H_1$ under $\varphi$. For $m = 2$, Lemma~\ref{lemma:H_0-image} proves the existence of a function that, together with the image of vertices in $H_1$ under $\varphi$, determines the image of vertices in $H_1$ under $\varphi$.  For $m > 2$, Lemmas~\ref{lem:m>2} and~\ref{lem:m>2_A} describe how $\varphi$ extends from $H_0$ to $H_1$. In Lemma~\ref{lem:H_i}, we show that once the action of $\varphi$ on $H_0 \cup H_1$ is determined, the image of vertices in $H_2 \cup \dots \cup H_r$ is uniquely determined. Finally, Lemma~\ref{lem:bipartite_upper_bound} establishes the exact number of possibilities for $\varphi$, thus confirming that no additional automorphisms exist beyond those described in Section~\ref{section:inclusions}.

\section{Preliminaries}
\label{section:preliminaries}

In this section, we present some known facts and definitions about group products. 
Let $G$ and $H$ be groups. The \emph{Cartesian} or \emph{direct product} of $G$ and $H$ is the group,
$G\times H$,  whose elements are all the tuples $(g,h)$, where $g \in G$
and $h \in H$, and with operation
\[(g,h)(g',h')=(gg',hh').\]
This definition can be generalized for an arbitrary number of factors.
Let $\{G_w\}_{w \in \Omega}$ be a family of groups. The Cartesian or direct product
of this  family is the group $\Pi_{w \in \Omega} G_w$ whose elements are all the tuples $a=(a_w)_{w \in \Omega}$,
and with operation
\[ab=(a_wb_w)_{w\in \Omega},\]
for $a,b \in \Pi_{w \in \Omega} G_w$. If all the $G_w$ are isomorphic to $G$, then we
may also denote  $\Pi_{w \in \Omega} G_w$ with $G^\Omega$. 

A \emph{homomorphism} between $G$ and $H$ is a function $\varphi:G \to H$
such that \[\varphi(xy)=\varphi(x)\varphi(y),\] for all $x,y \in G$.
A bijective homomorphism of $G$ with itself is called an \emph{automorphism} of $G$.
The set of all automorphisms of $G$ forms a group under function composition; we
denote it with $\aut(G)$. Let $\varphi:H \to \aut(G)$ be a group homomorphism.
Let $G \rtimes_\varphi H$ be the group whose elements are all the tuples $(g,h)$, where $g \in G$
and $h \in H$, and with operation
\[(g,h)(g',h')=(g\varphi(h)(g'),hh').\]
We call $G \rtimes_\varphi H$ an \emph{(outer) semidirect product} of $G$ and $H$. Note that
the structure of a semidirect product of $G$ and $H$ does not only depend on $G$ and $H$
but on $\varphi$ as well. Occasionally the $\varphi$ is dropped from the notation and
$G \rtimes_\varphi H$ is simply written as $G \rtimes H$.

Alternatively, suppose that $G$ and $H$ are subgroups of a larger group $N$, in which $G$ is normal.
Suppose that $G \cap H=\{e\}$, and that $GH=N$. Then the function $\varphi: H \to \aut(G)$,
defined by $\varphi(h)(x):=hxh^{-1}$ is a group homomorphism, and \[N \simeq G \rtimes_\varphi H.\]
In this case we say that $N$ is an \emph{(inner) semidirect product} of $G$ and $H$.

Let $\Omega$ be a set. An {\em action} of $H$ in $\Omega$ is a function $\alpha:H\times \Omega \to \Omega$, denoted by
$\alpha:(g,x) \mapsto gx$, such that:
\begin{itemize}
	\item $ex =x$ for all $x \in \Omega$ ,where $e$ is the identity of $H$; and
	\item $g(hx)=(gh)x$ for all $g,h \in H$ and $x \in \Omega$.
\end{itemize}
In this case we call $\Omega$ a \emph{$H$-set}, and we say that $H$ \emph{acts} on $\Omega$.
The action of $H$ on $\Omega$ induces a group homomorphism $\varphi$ from $H$ to $\aut(G^\Omega)$, by
letting
\[\varphi(h)((x_{w})_{w \in \Omega}):=(x_{h^{-1}w})_{w \in \Omega}.\]
We define the \emph{wreath product},  $G \wr_\Omega H$, as the group $G^\Omega \rtimes_\varphi H$.

\section{Proof of Theorem~\ref{thm:bipartite}}
\label{section:bipartite}

We have that $F_2({K_{2,2}}) \simeq K_{2,4}.$ 
Thus, \[\aut (F_2(K_{2,2}))=\mathbb{Z}_2 \times S_4.\] 
This proves $a)$ of Theorem~\ref{thm:bipartite}.
For the remainder of this section assume that $n>2$. In~\cite{rec}, it was shown that $F_k(K_{1,n})$
is uniquely reconstructible as the $k$-token graph of $K_{1,n}$. Thus,
\[\aut(F_k(K_{1,n})) \simeq \begin{cases}
	\aut(K_{1,n})  &\text{ if } k \neq \frac{n+1}{2},\\
	\aut(K_{1,n}) \times \mathbb{Z}_2 &\text{ otherwise. }
\end{cases}
\]
For the remainder of this section assume that $m \ge 2$. 

Throughout this section, let  $\{X,Y\}$ be the bipartition of $K_{m,n}$
with $m=|X|$ and $n=|Y|$.
Let $r:=\min\{m,k\}$. For $i\in \{0,\ldots,r\}$, let
\begin{equation*} H_i:=\{A\in V(F_k(K_{m,n})):|A\cap X|=i\}.\end{equation*}
Let us establish some basic properties of subsets $H_i$'s. First, we note that   $H_0,H_1,\ldots,H_r$ are pairwise disjoint, and that each $H_i$ is an independent set of $F_k(K_{m,n})$. Now, consider a vertex $A\in H_i$ and let $B$ be a neighbor of $A$.  
Observe that $B\in H_{i-1}\cup H_{i+1}$, 
because either $|B\cap X|=|A\cap X|-1$ or $|B\cap X|=|A\cap X|+1$. 
Moreover, let us show that  $F_k(K_{m,n})$ is bipartite with bipartition  $\{\mathcal{B},\mathcal{R}\}$,  where
\[\mathcal{B}:=\bigcup_{i \text{ even}} H_i \quad \textrm{and}\quad \mathcal{R}:=\bigcup\limits_{i\text{ odd}} H_i. \]
Indeed, consider an edge $AB$ of $F_k(K_{m,n})$, and let $ab$ be the edge of $K_{m,n}$ with $A\setminus B=\{a\}$ and $B\setminus A=\{b\}$.
Without any loss of generality we may assume that $a\in X$ and $b\in Y$. 
Observe that $A\cap X=(B\cap X)\cup \{a\}$ and so, 
the parity of $|A\cap X|$ and $|B\cap X|$ is different, implying that either $A\in\mathcal{B}$ and $B\in \mathcal{R}$, or $A\in \mathcal{R}$
and $B\in \mathcal{B}$. Thus, $\{\mathcal{B},\mathcal{R}\}$ is the bipartition of $F_k(K_{m,n})$. 

\subsection{Inclusions}
\label{section:inclusions}

First, we show that the groups stated in $b)$ and $c)$ of Theorem~\ref{thm:bipartite} are indeed subgroups
of $\aut(F_k(K_{m,n}))$.

\bigskip

\noindent \bm{$c)$}
This follows from the fact that if $k \neq \frac{n+m}{2}$, then $\iota(\aut(K_{m,n})) \le \aut(F_k(K_{m,n}))$;
and if $k = \frac{n+m}{2}$, then $\iota(\aut(K_{m,n})) \times \mathbb{Z}_2 \le \aut(F_k(K_{m,n}))$.

\bigskip

\noindent \bm{$b)$} Suppose that $m=2$ and let
\[Y:=\{1,\dots,n\}.\]
For $x \in X$, let $\overline{x}$ be the only
vertex in $X \setminus \{x\}.$
For a vertex $A \in H_1$ let \[\overline{A}:=(A\setminus X) \cup (X \setminus A).\]
That is, $\overline{A}$ is obtained by replacing the only element of $X$ in $A$ with the other
element of $X$. 

Let $\alpha \in 2^{\binom{[n]}{k-1}}$, and let $\varphi_\alpha: V(F_k(K_{2,n})) \to  V(F_k(K_{2,n}))$ be the function
defined by
\[\varphi_\alpha(A):= \begin{cases}
	\overline{A} & \text{ if $A \cap Y \in \alpha$, and } \\
	A   & \text{ otherwise. }
\end{cases}\]
Observe that $\alpha$ corresponds to a collection of $(k-1)$-subsets in $[n]$, and so, for each $S\in \alpha$, we have that $\varphi_\alpha$ is the function defined by $\varphi_\alpha(S\cup\{x\})=S\cup\{\overline{x}\}$. 

We recall that the group $\mathbb{Z}_2^{\binom{[n]}{k-1}}$ is the direct (or Cartesian) product of $\binom{n}{k-1}$ copies of $\mathbb{Z}_2$, where $\mathbb{Z}_2$ is the cyclic group of order $2$. 

We claim that

\begin{myquote} $\varphi_\alpha \in \aut(F_k(K_{2,n}))$, and the subgroup of $\aut(F_k(K_{2,n}))$
	generated by the $\varphi_{\alpha}$ is isomorphic to $\mathbb{Z}_2^{\binom{[n]}{k-1}}$.
\end{myquote}

It is straightforward to show that $\varphi_\alpha$ is bijective. We now show that $\varphi_\alpha$ is an automorphism. Let $AB$ be an edge of $F_k(K_{2,n})$.
Without loss of generality assume that $A \in H_1$; thus, $B \in H_0 \cup H_2$.
This implies that $\varphi_\alpha(B)=B$. If $A \cap Y \notin \alpha$,
then $\varphi_{\alpha}(A)=A$, and $\varphi_\alpha(A)$ is adjacent to $\varphi_\alpha(B)$ in this case.
Suppose that $A \cap Y \in \alpha$. Thus, $\varphi_\alpha(A)=\overline{A}$. Let $\{x,b\}:=A \triangle B$,
with $x \in X$ and $b \in Y$.
Note that regardless of whether $B \in H_0$ or $B \in H_2$, we have that
$\varphi_\alpha(A) \triangle \varphi_\alpha(B)=\{\overline{x}, b\}.$ Thus, $\varphi_\alpha(A)$
is adjacent to $\varphi_\alpha(B)$, and $\varphi_\alpha$ is an automorphism of 
$F_k(K_{2, n})$. 
Let us now show that for any two subsets $\alpha,\beta\in 2^{\binom{[n]}{k-1}}$, we have 
\begin{equation}
	\label{eq:dif-sim}
	\varphi_\alpha \varphi_\beta=\varphi_{\alpha \triangle \beta}.
\end{equation}
Indeed, consider a vertex $A\in F_k(K_{2,n})$. If $|A\cap Y|\ne k-1$, then $\varphi_\gamma(A)=A$ for each $\gamma\in \{\alpha,\beta,\alpha\triangle \beta\}$, and so~\eqref{eq:dif-sim} holds trivially. Suppose now that $|A\cap Y|=k-1$. Let
$S:=A\cap Y$. If $S\notin \alpha\triangle \beta$, it is straightforward to see that 
\[\varphi_\alpha(\varphi_\beta(A))=A=\varphi_{\alpha\triangle\beta}(A),\]
as desired. On the other hand, if $S\in \alpha\triangle \beta $, then 
\[\varphi_\alpha(\varphi_\beta(A))=\overline{A}=\varphi_{\alpha\triangle\beta}(A).\]
Thus,~\eqref{eq:dif-sim} holds.
The fact that 
$N_1:=\left \langle \left \{\varphi_\alpha: \alpha \in 2^{\binom{[n]}{k-1}} \right \} \right \rangle$ is
isomorphic to $\mathbb{Z}_2^{\binom{[n]}{k-1}}$, comes from considering $\alpha$ as
an element of $\mathbb{Z}_2^{\binom{[n]}{k-1}}$ (as explained above)  and \eqref{eq:dif-sim}. 
Note that $\varphi_\alpha$ is equal to the identity if and only if $\alpha=\emptyset$. Indeed, $\varphi_\alpha(A)=A$ for all $A\in V(F_k(K_{2,n}))$ if and only if $A\cap Y\notin \alpha$ for every $A$, which occurs precisely when $\alpha=\emptyset$.

Let $\pi \in S_n$. Let $\phi_\pi: V(K_{2,n}) \to  V(K_{2,n})$ be the function defined by
\[\phi_\pi(u):=\begin{cases}
	\pi(u) & \text{ if $u \in Y$, and } \\
	u   & \text{ otherwise. }
\end{cases}\]  Note that $\phi_\pi \in \aut(K_{2,n})$.
Let \[\psi_\pi:=\iota(\phi_\pi).\] Thus, $N_2:=\left \langle \left \{\psi_\pi: \pi \in S_n  \right \} \right \rangle$ is
a subgroup of $F_k(K_{2,n})$ isomorphic to $S_n$. Suppose that $\alpha \neq \emptyset$ and let $A \in V(F_k(K_{2,n}))$ such that
$A \cap Y \in \alpha$. Observe that $|A\cap X|=1$. Let  $\{x\}:=A\cap X$. We have that $\varphi_\alpha(A)=\overline{A}$, and so $x\notin \varphi_\alpha(A)$. On the other hand, for every $\pi \in S_n$, we have 
$x\in \psi_\pi(A)$, and so,
\[\varphi_\alpha(A) \neq \psi_\pi(A).\]
Thus, \[N_1 \cap N_2 = \{\id \}.\] Let $\pi(\alpha):=\{\pi(S):S \in \alpha\}.$ 
Let us now show that 
\begin{equation}
	\label{eq:comp-pi}
	\psi_\pi \varphi_\alpha \psi_\pi^{-1}=\varphi_{\pi(\alpha)}.
\end{equation}
Consider a vertex $A\in F_k(K_{2,n})$, and let $S:=A\cap Y$. We have that
\[\varphi_{\pi(\alpha)}(A)=\begin{cases}
	\overline{A} & \textrm{if $S\in \pi(\alpha)$,}\\
	A & \textrm{otherwise}. 
\end{cases}\]
If $|S|\ne k-1$, then $\varphi_{\alpha}(A)=A$ and $\varphi_{\pi(\alpha)}(\psi_{\pi}(A))=\psi_{\pi}(A)$, 
and so \eqref{eq:comp-pi} holds. 

Suppose now that $|S|=k-1$. Let $\{x\}:=A\cap X$. 
We have
\[\varphi_\alpha\psi_\pi^{-1}(A)=\varphi_\alpha(\{x\}\cup \pi^{-1}(S))=\begin{cases}
	\{\overline{x}\}\cup \pi^{-1}(S) & \textrm{if $\pi^{-1}(S)\in \alpha$}, \\
	\{x\}\cup \pi^{-1}(S)& \textrm{otherwise},
\end{cases}\]
and then, 
\[\psi_\pi\varphi_\alpha\psi_\pi^{-1}(A)=\begin{cases}
	\{\overline{x}\}\cup S =\overline{A} & \textrm{if $\pi^{-1}(S)\in \alpha$}, \\
	\{x\}\cup S=A& \textrm{otherwise}.
\end{cases}\]
Since $S\in \pi(\alpha)$ if and only if $\pi^{-1}(S)\in \alpha$, we conclude that~\eqref{eq:comp-pi} holds.

If $k\ne \frac{n+2}{2}$, then $N_1N_2$ is a subgroup of $\aut(F_k(K_{2,n}))$ isomorphic to \[\mathbb{Z}_2 \wr_{\binom{[n]}{k-1}} S_n.\]

Suppose now that $k=\frac{n+2}{2}$. In this case, $\mathfrak{c}$ is an automorphism of $F_k(K_{2,n})$.
Let $N_1$ and $N_2$ be as above, and let $N_3:=\langle N_2\cup \{\mc\}\rangle$.  
Observe that $\mc(H_0)=H_2$ and $\psi_\pi(H_0)=H_0$, for each $\psi_\pi\in N_2$, so $\mc\notin N_2$, and then 
$N_3$ is a subgroup of $\aut(F_k(K_{2,n}))$. Further, it is straightforward to see that 
$\mc^{-1}=\mc$ and 
$\mathfrak{c}\psi_\pi=\psi_\pi\mathfrak{c}$, so we have $N_3\simeq S_n\times \mathbb{Z}_2$. 
Moreover, since $\mc\notin N_1$, it follows that $N_1\cap N_3=\{\id\}$.

Given a set $S\subset Y$ with $|S|=k-1$, let $\overline{S}:=Y\setminus S$. 
Let $\mc(\alpha):=\{\overline{S}:S\in \alpha\}$. Let us now show that 
\begin{equation}
	\label{eq:composition-complement}
	\mc\varphi_\alpha \mc^{-1}=\varphi_{\mc(\alpha)}.
\end{equation}
Fix a vertex $A\in F_k(K_{2,n})$. Observe that if $A\in H_0\cup H_2$, then 
\[\varphi_\alpha(A)=A=\varphi_{\mc(\alpha)}(A), \]
and so~\eqref{eq:composition-complement} holds trivially. Assume that $A\in H_1$. Let $\{x\}:=A\cap X$
and $S:=A\cap Y$. We have 
\[\varphi_\alpha\mc^{-1}(A)=\varphi_\alpha\left(\{\overline{x}\}\cup \overline{S}\right)=\begin{cases}
	\{x\}\cup \overline{S} & \textrm{if $\overline{S}\in \alpha$}, \\
	\{\overline{x}\}\cup \overline{S} & \textrm{otherwise},
\end{cases}\]
and so, 
\[\mc\varphi_\alpha\mc^{-1}(A)=\begin{cases}
	\{\overline{x}\}\cup S=\overline{A} & \textrm{if $\overline{S}\in \alpha$}, \\
	\{x\}\cup S=A & \textrm{otherwise}.
\end{cases}\]
On the other hand, we have 
\[\varphi_{\mc(\alpha)}(A)=\begin{cases}
	\overline{A} & \textrm{if $S\in \mc(\alpha)$}, \\
	A & \textrm{otherwise}.
\end{cases}\]
Since $S\in \mc(\alpha)$ if and only if $\overline{S}\in \alpha$, \eqref{eq:composition-complement} is satisfied. 
Then, by~\eqref{eq:comp-pi}~and~\eqref{eq:composition-complement}, it follows that $N_1N_3$ is a subgroup of $\aut(F_k(K_{2,n}))$
isomorphic to 
\[\mathbb{Z}_2 \wr_{\binom{[n]}{k-1}} (S_n\times \mathbb{Z}_2).\]

\subsection{Equality}
\label{section:equality}

We now show that $\aut(F_k(K_{m,n}))$ has no more subgroups than those exhibited in the previous section. 
Let $\varphi$ be an arbitrary, but fixed, automorphism of $F_k(K_{m,n})$; and
let \[X:=\{x_1,\dots,x_m\} \textrm{ and } Y:=\{y_1,\dots,y_n\}.\]

\subsubsection*{The action of $\varphi$ on $\{H_0,\dots,H_r\}$} 
We first show that for all $i$ either $\varphi(H_i)  =H_i$ or $\varphi(H_i)  =H_{r-i}$; and give conditions
on when the latter can occur. 

\begin{lemma}
	\label{lemma:H_0-image}
	\-\ \begin{itemize}
		\item If $k\neq \frac{m+n}{2}$ and $m < n$, then 
		\[\varphi(H_i)=H_i,\]
		for all $i=0,\dots,r$; and
		
		\item if $k= \frac{m+n}{2}$ or $m = n$, then  either
		\[\varphi(H_i)  =H_i  \textrm{ or } \varphi(H_i)  =H_{r-i},\]
		for all $i=0,\dots,r$.
		\end {itemize}
	\end{lemma}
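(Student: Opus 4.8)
The plan is to study how $\varphi$ acts on the "levels" $H_0, H_1, \dots, H_r$ by exploiting structural invariants of each $H_i$ that are preserved by automorphisms. First I would observe that $F_k(K_{m,n})$ is bipartite with the parts $\mathcal{B}$ and $\mathcal{R}$ described above, so $\varphi$ either preserves or swaps $\{\mathcal{B},\mathcal{R}\}$; in either case $\varphi$ permutes the levels $\{H_0,\dots,H_r\}$ in a way that respects the parity partition. The key geometric fact is that $F_k(K_{m,n})$ restricted to the edges between consecutive levels $H_i$ and $H_{i+1}$ is a biregular bipartite graph, and the degree of a vertex $A\in H_i$ — split according to how many neighbors lie in $H_{i-1}$ versus $H_{i+1}$ — is determined by $i$ (roughly, a token in $X$ can slide to $Y$ giving a neighbor in $H_{i-1}$, and a token in $Y$ can slide to $X$ giving a neighbor in $H_{i+1}$, so $A$ has $i\cdot(n-k+i)$ neighbors in $H_{i-1}$ and $(k-i)(m-i)$ neighbors in $H_{i+1}$, with appropriate conventions at the ends). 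These "up-degree" and "down-degree" sequences are strictly monotone enough to pin down $i$ unless there is an exact symmetry of the parameters.

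**The core argument.**

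Next I would show that $\varphi$ must map each $H_i$ to some $H_j$ as a whole: since $H_i$ is exactly the set of vertices at a fixed distance behavior from $H_0$, and $H_0$ is itself characterized (it consists of the vertices with no neighbor "below" them, i.e.\ those having only down-edges — a purely combinatorial degree condition). Concretely, $H_0$ and $H_r$ are the two extremal levels, recognizable because their vertices have edges to only one neighboring level; so $\varphi(\{H_0,H_r\})=\{H_0,H_r\}$, and then inductively $\varphi(H_i)\in\{H_i,H_{r-i}\}$ by peeling off extremal levels. The case analysis then reduces to: if $\varphi(H_0)=H_0$ the monotonicity of the degree sequences forces $\varphi(H_i)=H_i$ for all $i$; if $\varphi(H_0)=H_r$ then the whole ladder is reversed, i.e.\ $\varphi(H_i)=H_{r-i}$. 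The reversal $\varphi(H_0)=H_r$ requires $H_0$ and $H_r$ to have the same size and the same internal structure: $|H_0|=\binom{n}{k}$ (choosing $k$ tokens from $Y$) and $|H_r|=\binom{m}{r}\binom{n}{k-r}$; when $r=k$ (i.e.\ $k\le m$) this is $\binom{m}{k}\binom{n}{0}=\binom{m}{k}$ — wait, more carefully, the reversal symmetry is exactly what the complement automorphism $\mathfrak{c}$ (available when $k=(m+n)/2$) or the bipartition-swap automorphism of $K_{m,n}$ (available when $m=n$) realizes. So I would verify that the down-degree sequence of $H_0,\dots,H_r$ equals the reversed up-degree sequence precisely when $k=(m+n)/2$ or $m=n$, which is the claimed dichotomy.

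**The main obstacle.**

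The hard part will be the degree-counting bookkeeping at the two ends of the ladder and making the monotonicity argument fully rigorous: one must track, for a vertex $A$ with $|A\cap X|=i$, exactly how many neighbors it has in $H_{i-1}$ and in $H_{i+1}$, handle the boundary cases $i=0$ and $i=r$ (where $r=\min\{m,k\}$, so the top level behaves differently depending on whether $r=m$ or $r=k$), and then show these two sequences in $i$ determine $i$ up to the global reversal. A subtlety is that an automorphism could in principle send $H_i$ into a union of several levels; ruling this out needs the observation that $\varphi$ preserves the bipartition-refinement induced by iterated neighborhoods of $H_0$, so connectedness of each $F_k(K_{m,n})[H_{i-1}\cup H_i]$ slab forces level-wise behavior. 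Once the level action is nailed down, identifying when the reversal is an honest automorphism (versus just a compatible relabeling of levels) is the place where the hypotheses $k=(m+n)/2$ and $m=n$ enter, and I expect that to follow cleanly by matching the slab structure. I would then conclude the lemma by this case split, deferring to later sections the finer analysis of how $\varphi$ acts within each level.
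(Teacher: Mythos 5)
There is a genuine gap at the heart of your argument: you treat the ``up-degree/down-degree'' split and the statement that $H_0$ is ``recognizable because its vertices have edges to only one neighboring level'' as combinatorial invariants available to $\varphi$, but the split of a vertex's neighborhood into levels presupposes exactly the level-preservation you are trying to prove, so the argument is circular. The only first-order invariant actually available is the total degree, and total degree does \emph{not} characterize $H_0$: a vertex of $H_j$ has degree $j(n-k+j)+(k-j)(m-j)=2j^2+j(n-m-2k)+km$, which equals the $H_0$-degree $km$ not only at $j=0$ but also at $j=\tfrac{m-n+2k}{2}$ whenever that is an integer in range. Ruling out this interior coincidence is precisely the nontrivial content of the lemma, and your proposal offers no mechanism for it; ``strict monotonicity'' of the degree sequence is simply false here. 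The paper's proof closes this gap with a second-order invariant: every neighbor of a vertex $A\in H_0$ lies in $H_1$, hence all neighbors of $\varphi(A)$ must have equal degree, whereas a vertex in an interior level $H_j$ ($0<j<r$) has neighbors in both $H_{j-1}$ and $H_{j+1}$ whose degrees differ unless $n-m=2k$, which then forces $j=0$. This is also exactly where the hypotheses enter: the surviving alternative $j=r=\tfrac{m-n+2k}{2}$ yields $m=n$ when $r=k$ and $k=\tfrac{m+n}{2}$ when $r=m$, so under $k\neq\tfrac{m+n}{2}$ and $m\neq n$ one gets $\varphi(H_0)=H_0$.

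Your concluding step is fine once this is repaired: since $H_i$ is the set of vertices at distance $i$ from $H_0$ (and $r-i$ from $H_r$), distance-preservation gives $\varphi(H_i)=H_i$ or $\varphi(H_i)=H_{r-i}$ according to where $H_0$ goes, which is the paper's final argument and subsumes your ``iterated neighborhoods'' remark. Note also that the second bullet of the lemma only asserts the dichotomy, so your program of verifying that the reversed level structure is genuinely realized (matching slab structures, sizes $|H_0|$ versus $|H_r|$, the complement or part-swap automorphisms) is not needed here; the converse realizability is handled elsewhere in the paper when the subgroups are exhibited.
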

	\begin{proof}
		First we show that $\varphi(H_0)=H_0$ or $\varphi(H_0)=H_r$. 
		Let $A\in H_0$ and let  $B:=\varphi(A)$.  Since $\deg(A)=km$, we have that $\deg(B)=km$. 
		Let $0 \le j \le r$ be such that $B \in H_j$.
		Thus, $\deg(B)=j(n-k+j)+(k-j)(m-j)=km$; this implies that $j=0$ or $j=\frac{m-n+2k}{2}$, where in the last case, $j$ is an integer. 
		Suppose that $0 < j < r$. 
		Let $B_1\in H_{j-1}, B_2 \in H_{j+1}$ be neighbors of $B$.
		Since $j \neq 0$, we have that 
		\[\deg(B_1)=(j-1)(n-k+(j-1))+(k-(j-1))(m-(j-1))=km+n-m-2k+2 \] and 
		\[\deg(B_2)=(j+1)(n-k+(j+1))+(k-(j+1))(m-(j+1))=km-n+m+2k+2.\]
		Since all the neighbors of $A$ are in $H_1$, they all have the same degree.  
		Thus, $\deg(B_1)=\deg(B_2)$, which implies that $n-m=2k$. Since $j=\frac{m-n+2k}{2}$, we
		have that $j=0$, a contradiction. Thus, \[\varphi(H_0)=H_0 \textrm{ or } \varphi(H_0)=H_r.\]
		
		Suppose that $r=j=\frac{m-n+2k}{2}$. This implies that  if $r=k$, then $m=n$; and if $r=m$, then $k=\frac{m+n}{2}$. In particular, 
		if $k \neq \frac{m+n}{2}$ and $m \neq n$, then \[ \varphi(H_0)=H_0.\]
		Note that, for $0 \le i \le r$, $H_i$ is precisely the set of vertices that are at distance equal to $i$ from $H_0$ and at distance
		$r-i$ from $H_r$. Thus, if $\varphi(H_0)=H_0$, then $\varphi(H_i)=H_i$, and if  $\varphi(H_0)=H_r$, then $\varphi(H_i)=H_{r-i}$. 
		The result follows. 
	\end{proof}

	\subsubsection{The action of $\varphi$ on $H_0$} \label{sec:inc}
	
	We now characterize how $\varphi$ maps $H_0$ to $\varphi(H_0)$.	
	Suppose that $k < m =n$ or $m <n$. 
	For $S \subset Y$, let \[P_S:=\{A \in H_0:S \subset A \} \textrm{ and } \overline{P}_S:=\{A \in H_r:S \cap A=\emptyset \}\]
	and for $S \subset X$, let \[Q_S:=\{A \in H_r:S \subset A \}.\]
	\begin{lemma}\label{lem:PQ}
		Suppose that $k<m=n$ or $m<n$. Then for every  $S\subset Y$ with $1\leq |S|\leq k-1$,
		there exists a set $S'$ contained either in $X$ or in $Y$, with $|S'|=|S|$, such that
		\[\varphi(P_S)=\begin{cases}
			P_{S'}   & \text{ if $\varphi(H_0)=H_0$; } \\[3ex] 
			Q_{S'} & \text{ if $\varphi(H_0)=H_r$ and $k<m=n$; and  } \\[3ex] 
			\overline{P}_{S'}  & \text{ if $\varphi(H_0)=H_r$, $m<n$ and $k=\frac{m+n}{2}$. } 
		\end{cases}
		\] 
	\end{lemma}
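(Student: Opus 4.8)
The plan is to analyze the structure of $P_S$ intrinsically inside $F_k(K_{m,n})$, so that its image under any automorphism is forced to be a set of the same combinatorial type. First I would set up the ``coclique / common-neighbourhood'' machinery: for a vertex $A\in H_0$ (so $A\subseteq Y$, $|A|=k$), its neighbourhood lies entirely in $H_1$, and two vertices of $H_0$ have a common neighbour in $H_1$ exactly when they share $k-1$ elements of $Y$. Iterating, the set $P_S$ for $S\subseteq Y$ with $|S|=s$ is precisely the set of vertices of $H_0$ all of whose pairwise symmetric differences avoid $S$ in a controlled way; more usefully, $P_S$ can be recovered as a maximal family of vertices of $H_0$ pairwise at distance $2$ (within the induced subgraph on $H_0$, i.e.\ connected by a path through $H_1$) whose ``intersection'' has size $s$. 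The key point is that $H_0$ with this induced adjacency-through-$H_1$ structure is isomorphic to the Johnson graph $J(n,k)$, and the sets $P_S$ are exactly its Delsarte cliques / sub-Johnson-subgraphs $J(n-s,k-s)$ sitting inside. So the first block of the argument is: characterize $P_S$ purely in graph-theoretic terms that are invariant under $\aut(F_k(K_{m,n}))$.

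Next I would push this invariant characterization through $\varphi$. By the previous lemma we know $\varphi(H_0)\in\{H_0,H_r\}$, and in each of the three listed cases the target set ($H_0$, or $H_r$ with $k<m=n$, or $H_r$ with $m<n$, $k=(m+n)/2$) carries an analogous internal structure: $H_0$ is $J(n,k)$, $H_r$ with $k=m=n$... wait---more precisely, when $k<m=n$ the set $H_r$ consists of $A$ with $|A\cap X|=r=k$, i.e.\ $A\subseteq X$, so $H_r\cong J(n,k)$ again and its clique families are the $Q_{S'}$; and when $m<n$, $k=\frac{m+n}{2}$, $r=m$, the set $H_r$ consists of $A\supseteq X$, so $A$ is determined by $A\setminus X\subseteq Y$ of size $k-m=\frac{n-m}{2}$, and again one gets a Johnson structure whose clique families are the $\overline{P}_{S'}$. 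Since $\varphi$ restricted to $H_0$ is a graph isomorphism onto $\varphi(H_0)$ (for the through-$H_1$ or through-$H_{r-1}$ adjacency), it must send the maximal ``distance-two clique of intersection-size $s$'' families to the corresponding families in the target, hence $\varphi(P_S)$ is one of $P_{S'}$, $Q_{S'}$, $\overline{P}_{S'}$ with $|S'|=|S|$, according to which case of $\varphi(H_0)$ holds. The remaining case $m<n$ with $k\neq\frac{m+n}{2}$ cannot have $\varphi(H_0)=H_r$ by the previous lemma (there $\varphi(H_0)=H_0$ forcibly), which is why only three cases appear.

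The main obstacle I anticipate is making the ``maximal distance-two family with prescribed intersection'' description both correct and automorphism-invariant---in the Johnson graph $J(n,k)$ there are two types of maximal cliques (the $\binom{n}{k-1}$-type from fixing a $(k-1)$-set, and the $(n-k+1)$-type from fixing a $(k+1)$-set), and for the sub-families $P_S$ with $|S|>1$ one is really talking about sub-Johnson-subgraphs rather than cliques, so I would need a clean uniform statement. The natural route is induction on $|S|$: the case $|S|=1$ is handled by the clique analysis directly (a maximal clique of the larger type in the $H_0$-graph is exactly a $P_{\{y\}}$, or its image a $Q_{\{x\}}$ or $\overline P_{\{x\}}$, and one must also rule out the wrong clique type by a size/degree count using $n>k$ or $n>m$), and then $P_S$ for $|S|=s+1$ is obtained as $\bigcap$ of the $P_{\{y\}}$ over $y\in S$, or equivalently as a maximal sub-family meeting certain already-identified $P_{\{y\}}$'s; since intersections are preserved by $\varphi$, the inductive step transports $S'$ along. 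The only genuinely delicate bookkeeping is checking that the hypothesis $k<m=n$ or $m<n$ guarantees the relevant Johnson graphs are large enough that the two clique types have different sizes (so $\varphi$ cannot swap them), and that in the $\overline P$ case the bound $|S|\le k-1$ together with $k=\frac{m+n}{2}$ keeps $|S'|\le$ the parameter $k-m$ of the target Johnson structure---these are finite inequality checks I would verify but not belabor here.
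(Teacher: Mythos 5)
There is a genuine gap at the heart of your plan: the graph-theoretic characterization of the sets $P_S$ on which everything else rests is incorrect. In the distance-two (through-$H_1$) structure on $H_0$, which is indeed the Johnson graph $J(n,k)$, the set $P_{\{y\}}$ is \emph{not} a maximal clique unless $k=2$: the maximal cliques of $J(n,k)$ are the families $\{A: T\subset A\}$ with $|T|=k-1$ (size $n-k+1$) and the families $\{A: A\subset U\}$ with $|U|=k+1$ (size $k+1$), so the cliques among the $P_S$ are exactly those with $|S|=k-1$, not $|S|=1$. Likewise your tentative description of a general $P_S$ as a ``maximal family pairwise at distance two with intersection of size $s$'' fails for $|S|<k-1$, since two $k$-subsets of $Y$ containing $S$ need not meet in $k-1$ elements. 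Consequently the base case $|S|=1$ of your proposed induction is unsupported, and since every larger $S$ is to be handled by intersecting the (unproved) images $\varphi(P_{\{y\}})$, the whole argument collapses. You flag this difficulty yourself (``a clean uniform statement'' is needed) but never resolve it; resolving it is precisely the content of the lemma.

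The paper runs the induction in the opposite direction, on $k-|S|$: the base case is $|S|=k-1$, where the vertices of $P_S$ genuinely have a common neighbour $S\cup\{x_1\}$ in $H_1$, and a common-neighbour/counting argument pins down $S'$ with $|S'|=|S|$ in each of the three cases; the inductive step then recovers $P_S$ for smaller $|S|$ as the \emph{union} $\bigcup_j P_{S_j}$ over the $(|S|+1)$-supersets $S_j$ of $S$, and uses distance-two counting plus a Pascal's-rule cardinality comparison to force $|S'_\ell\triangle S'_t|=2$ and $S'_\ell\cap S'_t\subset S'_j$, hence $|S'|=|S|$. If you want to keep your ``identify the Johnson structure and transport it'' viewpoint, the honest shortcut would be to quote the known classification of automorphisms of Johnson graphs (permutations of the ground set, plus complementation when the two clique types have equal size), and then match the complementation possibility with the cases $k<m=n$ and $m<n$, $k=\frac{m+n}{2}$; but that is a different argument from the one you sketch, and it still requires checking that $\varphi$ restricted to $H_0$, composed with the natural identifications of $H_0$ and $H_r$ with vertex sets of Johnson graphs, is such an isomorphism. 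As written, your proposal neither proves the $|S|=1$ case nor invokes a result that would replace it.
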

	\begin{proof}
		By Lemma~\ref{lemma:H_0-image}, we know that $\varphi(H_0)=H_0$ or $\varphi(H_0)=H_r$. We now proceed by induction on $k-|S|$. 
		\begin{itemize}
			\item Suppose that $k-|S|=1$. Then $|S|=k-1$.

			Let $\{A_1,\ldots,A_p\}:=P_S$. For every $1 \le i \le p$, let $B_i:=\varphi(A_i)$.
			Let \[S'=\begin{cases}
				\bigcap_{i=1}^p B_i  & \text{ if $\varphi(H_0)=H_0$;} \\[3ex] 
				\bigcap_{i=1}^p B_i  & \text{ if $\varphi(H_0)=H_r$ and $k<m=n$; and  } \\[3ex]
				\bigcap_{i=1}^p ( (X \cup Y)\setminus B_i)  & \text{ if $\varphi(H_0)=H_r$, $m<n$ and $k=\frac{m+n}{2}$. } 
			\end{cases}
			\] 
			Note that 
			\[\varphi(P_S) \subset \begin{cases}
				P_{S'}   & \text{ if $\varphi(H_0)=H_0$; } \\[3ex] 
				Q_{S'} & \text{ if $\varphi(H_0)=H_r$ and $k<m=n$; and  } \\[3ex] 
				\overline{P}_{S'}  & \text{ if $\varphi(H_0)=H_r$, $m<n$ and $k=\frac{m+n}{2}$. } 
			\end{cases}
			\] 
			It remains to show that  $|S'|=|S|$. 
			
			\begin{itemize}
				\item Suppose that $k=n-1$. We have that $p=2$. If $\varphi(H_0)=H_0$,
				then $B_1, B_2 \subset Y$ and $|S'|=|B_1 \cap B_2|=n-2=k-1=|S|$. 
				If $\varphi(H_0)=H_r$ and $k<m=n$, then $B_1, B_2 \subset X$ and $|S'|=|B_1 \cap B_2|=n-2=k-1=|S|$. 
				Suppose that  $\varphi(H_0)=H_r$, $m<n$ and $k=\frac{m+n}{2}$. 
				We have that $S'=((X\cup Y)\setminus B_1) \cap ((X \cup Y) \setminus B_2)$. Note that $(X\cup Y)\setminus B_i \subset Y$
				and $|(X\cup Y)\setminus B_i|=n-(n-k)=n-1$. This implies that $|S'|=n-2=k-1=|S|$.
				
				\item Suppose $k<n-1$. Thus, $p> 2$. Note that $S\cup\{x_1\} \in H_1$ is adjacent to any vertex in  $\{A_1,\ldots,A_p\}$. 
				For a contradiction suppose that $|S'|<k-1$.
				\begin{itemize}
					\item Suppose that either $\varphi(H_0)=H_0$, or  $\varphi(H_0)=H_r$ and $k<m=n$.
					We have that $S'=\bigcap_{i=1}^p B_i$. Thus, there exist three indices $j,l,t$ such that 
					$B_j\cap B_l\neq B_l \cap B_t$; this implies that the vertices $B_j,B_l$ and $B_t$ have no common neighbors in $\varphi(H_1)$;
					which contradicts the assumption that $\varphi$ is an automorphism. Therefore, $|S'|=k-1=|S|$.
					
					\item Suppose that  $\varphi(H_0)=H_r$, $m<n$ and $k=\frac{m+n}{2}$.
					Since $m<n$ and $k=\frac{m+n}{2}$, we have that $k > m$ and $r<k$. In particular, these imply that $X \subset B_i$ and $|B_i \cap Y|=k-m=n-k$. 
					Therefore, $|(X \cup Y)\setminus B_i|=k$. 
					
					We now consider three vertices in $\{B_1,\dots,B_p\}$ such that their union contains at least $n-k+2$ vertices of $Y$, 
					which is feasible by the definition of $S'$, as  $|S'|\le k-2$, which implies $|Y\setminus S'|\ge n-k+2$. 
					Without loss of generality assume these three vertices are $B_1,B_2,B_3$. 
					Since these three vertices must have at least one common neighbor, we must have $|B_1\cap B_2\cap B_3|=k-1$; otherwise,
					there exist $i,j\in [3]$ such that $|B_i\triangle B_j|\ge 4$, implying $B_i$ and $B_j$ have no common neighbors, a contradiction. 
					Therefore, with $|B_1\cap B_2\cap B_3|=k-1$, any common neighbor of both $B_1$ and $B_2$ cannot also be
					a neighbor of $B_3$. This implies that the three vertices have no common neighbors in $\varphi(H_1)=H_{r-1}$, leading to a contradiction. 
					Therefore, $|S'|=k-1=|S|$.
				\end{itemize}
			\end{itemize}
			
			\item Suppose that $k-|S|\ge 2$, and assume that the lemma holds for  smaller values of $k-|S|$.
			
			Let $S_1, \ldots, S_{p}$ be the subsets of $Y$ such that $S \subset S_j$ and $|S_j|=|S|+1$.
			Note that $S=\bigcap_{j=1}^{p} S_j$ and $P_S=\bigcup_{j=1}^{p} P_{S_j}$.  
			By the induction hypothesis, there exists
			$S'_1,\ldots,S'_{p}$ contained either in $X$ or in $Y$, such that for every $1 \le j \le {p}$,
			\[\varphi(P_{S_j})=\begin{cases}
				P_{S'_j}   & \text{ if $\varphi(H_0)=H_0$; } \\[3ex] 
				Q_{S'_j} & \text{ if $\varphi(H_0)=H_r$ and $k<m=n$; and  } \\[3ex] 
				\overline{P}_{S'_j}  & \text{ if $\varphi(H_0)=H_r$, $m<n$ and $k=\frac{m+n}{2}$. } 
			\end{cases}
			\] 
			and $|S'_j|=|S_j|$. For every $1\le j \le p$ let $R_{S'_j}=\varphi(P_{S_j})$.
			
			Let $S':=\bigcap_{j=1}^p S'_j$.
			Note that \[\varphi(P_S) \subset \begin{cases}
				P_{S'}   & \text{ if $\varphi(H_0)=H_0$; } \\[3ex] 
				Q_{S'} & \text{ if $\varphi(H_0)=H_r$ and $k<m=n$; and  } \\[3ex] 
				\overline{P}_{S'}  & \text{ if $\varphi(H_0)=H_r$, $m<n$ and $k=\frac{m+n}{2}$. } 
			\end{cases}
			\] 
			It remains to show that  $|S'|=|S|$.
			
			For every pair of indices $l \neq t$, we have that $|S_l\triangle S_t|=2$. Thus,
			for every $A \in P_{S_l}\setminus P_{S_t}$ there exists a unique $A' \in P_{S_t}\setminus P_{S_l}$
			such that $|A\triangle A'|=2$. In particular, $d(A,A')=2$. 
			Suppose that $|S'_l\triangle S'_t|>2$. 
			
			Let now $A \in R_{S'_l}\setminus R_{S'_t}$. Since $|S'_l|=|S'_t|=|S|+1 \le k-1 \le n-2$, we have that either    
			$d(A,B)>2$  for every $B \in R_{S'_t}\setminus R_{S'_l}$ or $d(A,B_1)=d(A,B_2)=2$ for some two distinct 
			$B_1,B_2 \in R_{S'_t}\setminus R_{S'_l}$.
			This contradicts the fact that $\varphi$ is an automorphism. Thus, 
			\begin{equation}
				\label{eq:triangle_S'}
				|S'_l \triangle S'_t|=2,
			\end{equation}
			for every pair of distinct $S_l'$ and $S_t'$.

			We now claim that for every $S'_j$ distinct from $S_l'$ and $S_t'$ we have that
			\[S'_l\cap S'_t\subset S'_j.\] Suppose for a contradiction that there exists $S_j'$ 
			such that $S'_l\cap S'_t\not\subset S'_j.$ 
			Observe that there is a unique $p\in (S'_l\cap S'_t)\setminus S'_j$, as otherwise we would have $|S'_j\triangle S'_l|>2$
			and $|S'_j\triangle S'_t|>2$, contradicting~\eqref{eq:triangle_S'}. Further, we must have $S'_l\setminus S'_j=\{p\}$
			and $S'_t\setminus S'_j=\{p\}$, implying that $|S'_j\cup S'_l\cup S'_t|=|S'_j|+1=|S|+2$. 
			On the other hand, we have $|S_j\cup S_l\cup S_t|=|S|+3$.
			Thus, 
			\[|R_{S'_j}\cap R_{S'_l}\cap R_{S'_t}|=\binom{n-(|S|+2)}{k-(|S|+2)}=\binom{n-(|S|+2)}{n-k}\]
			and
			\[|P_{S_j}\cap P_{S_l}\cap P_{S_t}|=\binom{n-(|S|+3)}{k-(|S|+3)}=\binom{n-(|S|+3)}{n-k}.\]
			Since $\varphi$ is an automorphism, $\varphi$ sends $(P_{S_j}\cap P_{S_l}\cap P_{S_t})$ to $(R_{S'_j}\cap R_{S'_l}\cap R_{S'_t})$.
			By Pascal's Rule\footnote{$\binom{a}{b-1}+\binom{a}{b}=\binom{a+1}{b}$} we have that $\binom{n-(|S|+3)}{n-k-1}=0.$
			Therefore, $|S|>k-2$, a contradiction. 	Thus, $S'_l\cap S'_t\subset S'_j$, and $|S'|=|S|$ as claimed. 
		\end{itemize}
	\end{proof}
	For $0 \le s \le r$ and $1 \le i \le n$, let 
	\[X(s,i) :=\{A\in H_s:x_i\in A\} \textrm{ and } \overline{X}(s,i) :=\{A\in H_s:x_i\notin A\};\]
	and
	\[Y(s,i) :=\{A\in H_s:y_i\in A\} \textrm{ and } \overline{Y}(s,i) :=\{A\in H_s:y_i\notin A\}.\]
	Observe that when  $m < n$, we have  $X(s,i) = \emptyset$  for each  $i \in \{m+1, \dots, n\}$, 
	since there is no element  $x_i \in X$  (recall that  $|X| = m$). 
	
	\begin{lemma}	\label{lem:permutation_sigma} 
		Suppose that $k<m=n$ or $m<n$. Then there exists a $\pi_\varphi \in S_n$ such that, for every $1 \le i \le n$,
		\[\varphi(Y(0,i))=\begin{cases}
			Y(0,\pi_\varphi(i))   & \text{ if $\varphi(H_0)=H_0$; } \\[3ex] 
			X(r,\pi_\varphi(i))   & \text{ if $\varphi(H_0)=H_r$ and $k<m=n$; and  } \\[3ex] 
			\overline{Y}(r,\pi_\varphi(i))  & \text{ if $\varphi(H_0)=H_r$, $m<n$ and $k=\frac{m+n}{2}$. } 
		\end{cases}
		\] 
	\end{lemma}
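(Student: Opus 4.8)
The plan is to package the behaviour of $\varphi$ on singleton-indexed sets using the previous lemma. Recall that a set $Y(0,i)$ is exactly $P_{\{y_i\}}$, the set of vertices of $H_0$ containing the token $y_i$. By Lemma~\ref{lem:PQ} applied to $S=\{y_i\}$ (which has size $1\le k-1$ since we are in a case where $k\ge 2$), there is a one-element set $S_i'$, contained either in $X$ or in $Y$, such that $\varphi(P_{\{y_i\}})$ equals $P_{S_i'}$, $Q_{S_i'}$, or $\overline P_{S_i'}$ according to the three cases. So for each $i$ we obtain a single vertex $v_i$ (the unique element of $S_i'$), and the task reduces to showing that $i\mapsto v_i$ is a bijection onto $Y$ in the first and third cases, and onto $X$ in the second case, so that it defines the desired permutation $\pi_\varphi$ (after identifying $X=\{x_1,\dots,x_n\}$ with $[n]$ via indices in the second case, which is legitimate since there $m=n$).

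The key steps, in order: (1) Injectivity of $i\mapsto v_i$: if $v_i=v_j$ with $i\neq j$, then $\varphi(P_{\{y_i\}})=\varphi(P_{\{y_j\}})$ in cases one and two, forcing $P_{\{y_i\}}=P_{\{y_j\}}$ and hence $y_i=y_j$, a contradiction; in the third (complement) case the same argument applies since $S\mapsto (X\cup Y)\setminus S$ is a bijection, so $\overline P_{S_i'}=\overline P_{S_j'}$ still forces $v_i=v_j\Rightarrow i=j$. (2) Codomain: one must check that every $v_i$ lands in the claimed side. This is where I would lean on the structure already exposed in the proof of Lemma~\ref{lem:PQ}: in the base case there the set $S'$ was built as an intersection of the images $B_\ell$ (or their complements), and those images lie in $Y$ when $\varphi(H_0)=H_0$, in $X$ when $\varphi(H_0)=H_r$ and $k<m=n$, and in $Y$ (after complementation) in the remaining case. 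So $v_i\in Y$, resp.\ $v_i\in X$, resp.\ $v_i\in Y$, matching the statement. (3) Surjectivity: since $\varphi$ is a bijection of $F_k(K_{m,n})$ and $i\mapsto v_i$ is an injection between two $n$-element sets ($Y$ in the first and third cases, $X$ in the second, using $m=n$ there), injectivity plus finiteness gives surjectivity; alternatively, apply the whole argument to $\varphi^{-1}$ to produce an explicit inverse. Then set $\pi_\varphi(i)$ to be the index of $v_i$ in $Y$ (resp.\ in $X$), and the three displayed equalities are precisely the translation of Lemma~\ref{lem:PQ} for $S=\{y_i\}$.

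I expect the only real subtlety to be bookkeeping in the third case: verifying that the ``complement'' operation interacts correctly with the $\overline P$ notation, i.e.\ that $\varphi(P_{\{y_i\}})=\overline P_{S_i'}$ with $S_i'\subset Y$ a singleton, and that $\overline Y(r,\pi_\varphi(i))$ as defined equals $\overline P_{\{y_{\pi_\varphi(i)}\}}$. Unwinding the definitions, $\overline Y(r,j)=\{A\in H_r: y_j\notin A\}=\overline P_{\{y_j\}}$, so this is a direct match once $v_i=y_{\pi_\varphi(i)}$ is established. Everything else is a routine transcription of Lemma~\ref{lem:PQ} together with the pigeonhole/bijection argument; no new combinatorial input is needed beyond what that lemma already provides.
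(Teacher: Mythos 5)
Your proposal is correct and follows essentially the same route as the paper: the paper's proof simply applies Lemma~\ref{lem:PQ} with $S=\{y_i\}$ and declares the induced index map to be the desired permutation $\pi_\varphi$. The injectivity/surjectivity bookkeeping and the identification $\overline{Y}(r,j)=\overline{P}_{\{y_j\}}$ that you spell out are exactly the details the paper leaves implicit, so your write-up is a (slightly more careful) version of the same argument.
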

	\begin{proof}
		By Lemma~\ref{lem:PQ},
		\[\varphi(Y(0,i))=\varphi(P_{\{y_i\}})=\begin{cases}
			P_{\{y_{\pi_{\varphi}(i)}\}}   & \text{ if $\varphi(H_0)=H_0$; } \\[3ex] 
			Q_{\{x_{\pi_{\varphi}(i)}\}} & \text{ if $\varphi(H_0)=H_r$ and $k<m=n$; and  } \\[3ex] 
			\overline{P}_{{\{y_{\pi_{\varphi}(i)}}\}}  & \text{ if $\varphi(H_0)=H_r$, $m<n$ and $k=\frac{m+n}{2}$; } 
		\end{cases}
		\] 
		The map  $\pi_\varphi$ is our desired permutation.
		See Figure~\ref{fig:Y_i's}, for an example. 
	\end{proof}
	From now on, $\pi_\varphi$ denotes the permutation given by Lemma~\ref{lem:permutation_sigma}.
	In what follows, we show that $\pi_\varphi$,  and whether $\varphi(H_0)=H_0$ or $\varphi(H_0)=H_r$,  
	determine the image of the vertices of $H_0$ under $\varphi$.

	\begin{figure}
		\centering
		\includegraphics[width=0.95\linewidth]{./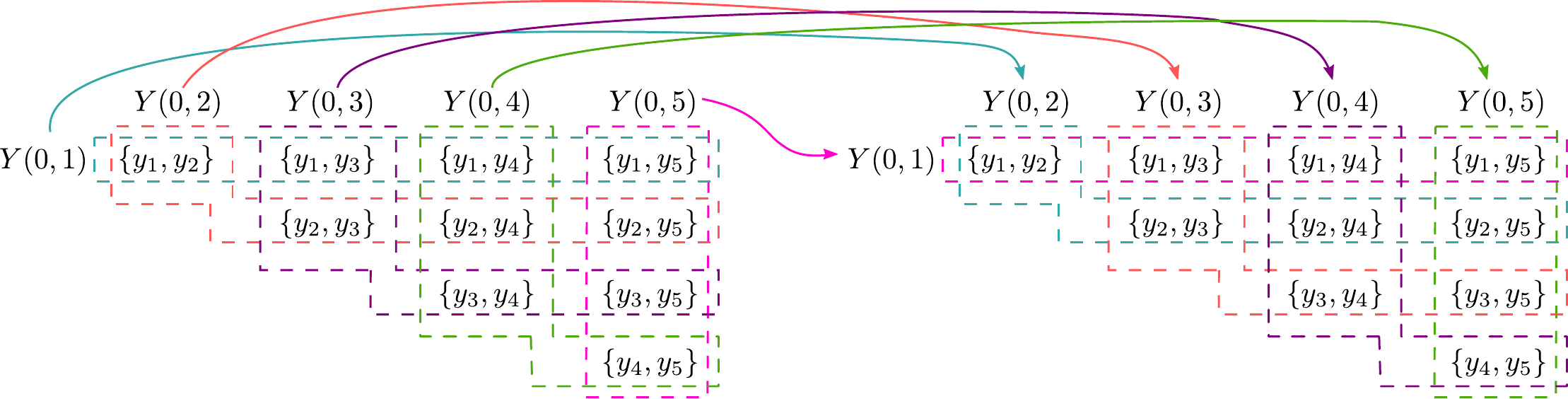}
		\caption{An example of $\pi_\varphi$ on sets $Y(0,1),\ldots,Y(0,n)$, where $n=5$ and $\varphi(H_0)=H_0$.}
		\label{fig:Y_i's}
	\end{figure}

	\begin{cor} 
		\label{cor:H_0-labelling}
		Let $A:=\{y_{t_1},\ldots,y_{t_k}\}\in H_0$.
		\begin{itemize}
			\item If $k=m=n$, then 
			\[\varphi(A)=\begin{cases}
				\{y_1,\ldots,y_n\} & \text{if $\varphi(H_0)=H_0$; and} \\
				\{x_1,\ldots,x_m\} & \text{if $\varphi(H_0)=H_r$.}
			\end{cases}
			\]
			\item If $k<m=n$ or $m<n$, then 
			\[\varphi(A)=\begin{cases} 
				\{y_{\pi_\varphi(t_1)},\ldots,y_{\pi_\varphi(t_k)}\} & \text{if $\varphi(H_0)=H_0$;} \\
				\{x_{\pi_\varphi(t_1)},\ldots,x_{\pi_\varphi(t_k)}\} & \text{if $\varphi(H_0)=H_r$ and $k<m=n$; and} \\
				V(K_{m,n})\setminus \{y_{\pi_\varphi(t_1)},\ldots,y_{\pi_\varphi(t_k)}\} & \text{if $\varphi(H_0)=H_r$, $m<n$ and $k=\frac{m+n}{2}$.}
			\end{cases}
			\]
		\end{itemize}
	\end{cor}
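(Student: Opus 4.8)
The plan is to reduce the statement to Lemma~\ref{lem:permutation_sigma} by observing that every vertex of $H_0$ is uniquely determined by which of the sets $Y(0,i)$ contain it. Concretely, for $A=\{y_{t_1},\ldots,y_{t_k}\}\in H_0$ I would note that $\{A\}=\bigcap_{j=1}^{k}Y(0,t_j)$: any $B\in H_0$ containing $y_{t_1},\ldots,y_{t_k}$ has $|B|=k$ and therefore equals $A$. Since $\varphi$ is a bijection, this gives $\{\varphi(A)\}=\bigcap_{j=1}^{k}\varphi(Y(0,t_j))$, and the corollary then follows by substituting the three formulas of Lemma~\ref{lem:permutation_sigma} and reading off the unique element of the resulting intersection.

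In the case $\varphi(H_0)=H_0$ we get $\{\varphi(A)\}=\bigcap_{j=1}^{k}Y(0,\pi_\varphi(t_j))$, the set of $B\in H_0$ containing the $k$ distinct vertices $y_{\pi_\varphi(t_1)},\ldots,y_{\pi_\varphi(t_k)}$, which forces $\varphi(A)=\{y_{\pi_\varphi(t_1)},\ldots,y_{\pi_\varphi(t_k)}\}$. In the case $\varphi(H_0)=H_r$ with $k<m=n$ we have $r=k$, so every $B\in H_r$ is a $k$-subset of $X$, and $\bigcap_{j=1}^{k}X(r,\pi_\varphi(t_j))$ pins down $\varphi(A)=\{x_{\pi_\varphi(t_1)},\ldots,x_{\pi_\varphi(t_k)}\}$. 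In the case $\varphi(H_0)=H_r$ with $m<n$ and $k=\frac{m+n}{2}$ we have $r=m$, so every $B\in H_r$ contains $X$ and meets $Y$ in exactly $k-m$ vertices; hence $|Y\setminus B|=n-(k-m)=k$, and $\bigcap_{j=1}^{k}\overline{Y}(r,\pi_\varphi(t_j))$ — the $B\in H_r$ avoiding the $k$ distinct vertices $y_{\pi_\varphi(t_1)},\ldots,y_{\pi_\varphi(t_k)}$ — forces $Y\setminus B=\{y_{\pi_\varphi(t_1)},\ldots,y_{\pi_\varphi(t_k)}\}$, i.e.\ $\varphi(A)=V(K_{m,n})\setminus\{y_{\pi_\varphi(t_1)},\ldots,y_{\pi_\varphi(t_k)}\}$.

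The case $k=m=n$ I would dispatch first and separately: here $|A|=k=n$ with $A\subseteq Y$ forces $A=Y$, so $H_0=\{Y\}$, and $r=n$ gives $H_r=\{X\}$; thus $\varphi(H_0)=H_0$ means $\varphi(Y)=Y$ and $\varphi(H_0)=H_r$ means $\varphi(Y)=X$, exactly as claimed.

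The only step requiring genuine care — and the one I would write out in most detail — is the third case: one must verify the cardinality bookkeeping ($r=m$ and $|Y\setminus B|=k$ for every $B\in H_r$) to be sure the intersection $\bigcap_{j=1}^{k}\overline{Y}(r,\pi_\varphi(t_j))$ is a single vertex rather than empty or larger, and it is precisely the hypothesis $k=\frac{m+n}{2}$ that makes this balance out. Everything else is a routine translation of Lemma~\ref{lem:permutation_sigma}.
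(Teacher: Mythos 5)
Your proposal is correct and follows essentially the same route as the paper: the paper's proof also handles $k=m=n$ by noting $|H_0|=|H_r|=1$, and otherwise writes $\{A\}=\bigcap_{i=1}^{k}Y(0,t_i)$, applies Lemma~\ref{lem:permutation_sigma} to the intersection, and reads off the unique element in each of the three cases. Your extra cardinality bookkeeping in the case $\varphi(H_0)=H_r$, $m<n$, $k=\frac{m+n}{2}$ (that $r=m$, $X\subset B$ and $|Y\setminus B|=k$) is exactly the detail the paper leaves implicit, and it checks out.
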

	\begin{proof}
		If $k=m=n$, then $|H_0|=|H_r|=1$ and
		\[\varphi (A)=\begin{cases}
			\{y_1,\ldots,y_n\} & \text{if $\varphi(H_0)=H_0$,} \\
			\{x_1,\ldots,x_m\} & \text{if $\varphi(H_0)=H_r$,}
		\end{cases}
		\]
		since $\{y_1,\ldots,y_n\}$ (resp. $\{x_1,\ldots,x_n\}$) is the only vertex in $H_0$ (resp. $H_r$).  
		
		Suppose that $k<m=n$ or $m<n$. Note that 
		$\{A \} =\bigcap\limits_{i=1}^kY(0,t_i).$  Lemma~\ref{lem:permutation_sigma} implies that: 
		\begin{itemize}
			\item if $\varphi(H_0)=H_0$, then $\{\varphi(A)\} = \bigcap\limits_{i=1}^kY(0,\pi_\varphi(t_i))$; 
			
			\item if  $\varphi(H_0)=H_r$ and  $k<m=n$, then $\{\varphi(A)\} = \bigcap\limits_{i=1}^k X(r,\pi_\varphi(t_i))$; and
			
			\item if  $\varphi(H_0)=H_r$, $m<n$ and $k=\frac{m+n}{2}$, then $\{\varphi(A)\} = \bigcap\limits_{i=1}^k \overline{Y}(r,\pi_\varphi(t_i))$.
		\end{itemize}
		Therefore,  
		\[\varphi(A)=\begin{cases} 
			\{y_{\pi_\varphi(t_1)},\ldots,y_{\pi_\varphi(t_k)}\} & \text{if $\varphi(H_0)=H_0$,} \\
			\{x_{\pi_\varphi(t_1)},\ldots,x_{\pi_\varphi(t_k)}\} & \text{if $\varphi(H_0)=H_r$ and $k<m=n$,} \\
			V(K_{m,n})\setminus \{y_{\pi_\varphi(t_1)},\ldots,y_{\pi_\varphi(t_k)}\} & \text{if $\varphi(H_0)=H_r$, $m<n$ and $k=\frac{m+n}{2}$.}
		\end{cases}
		\]
		
	\end{proof}

	\subsubsection{The action of $\varphi$ on $H_1$}
	We now characterize how $\varphi$ maps $H_1$ to $\varphi(H_1)$.
	Note that $\{X(1,1),\ldots,X(1,m)\}$ is a partition of $H_1$, and if $m=n$ (resp. $k=\frac{m+n}{2}$) 
	$\{Y(r-1,1),\ldots,Y(r-1,n)\}$ (resp.  $\{\overline{X}(r-1,1),\ldots,\overline{X}(r-1,m)\}$) is a partition of $H_{r-1}$.
	In the following result we characterize the behavior of $\varphi$ on the sets $X(1,1),\ldots,X(1,m)$. 
	Recall that we are assuming that $m\ge 2$ and $n>2$. We distinguish the following three cases:
	$m=2$; $k=m=n>2$; and, $m>2$ with either $k<m=n$ or $m<n$. We consider these cases separately.

	\begin{lemma}
		\label{lem:m=2}
		Suppose that $m=2$. Let $H_1=:\left \{A_1,\ldots,A_{\binom{n}{k-1}},B_1,\ldots,B_{\binom{n}{k-1}}\right \}$,
		so that $A_i \cap X=\{x_1\}$, $B_i \cap X=\{x_2\}$ and $A_i \cap Y=B_i \cap Y$,
		for all $1 \le i \le \binom{n}{k-1}$. Then there exist  $\alpha_\varphi \in 2^{\binom{Y}{k-1}}$, and  $\pi_\varphi'\in S_{\binom{n}{k-1}}$,
		such that the following hold.
		\begin{itemize}
			\item[$a)$] $\pi_\varphi'$ depends only on $\pi_\varphi$, and whether $\varphi(H_0)=H_0$ or $\varphi(H_0)=H_r$;
			
			\item[$b)$] $\varphi$ sends the pair $\left \{A_i,B_i\right \}$ to the pair
			$\left \{A_{\pi_\varphi'(i)},B_{\pi_\varphi'(i)} \right \}$; and
			
			\item[$c)$] for all $1 \le i \le \binom{n}{k-1}$,
			
			\[\varphi(A_i)=
			\begin{cases}
				B_{\pi_\varphi'(i)} & \text{ if } A_i \cap Y \in \alpha_\varphi, \textrm{ and } \\
				A_{\pi_\varphi'(i)} & \text{ otherwise. }
			\end{cases}\]
		\end{itemize}
	\end{lemma}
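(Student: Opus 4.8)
The plan is to analyze $\varphi$ restricted to $H_1$ by exploiting the structure revealed in the previous sections, together with the neighbor structure that links $H_1$ to $H_0$. First I would note that the pairs $\{A_i, B_i\}$ are exactly the pairs of twins in $H_1$: since $m=2$, the vertex $A_i$ (with $A_i \cap X = \{x_1\}$) and $B_i$ (with $B_i \cap X = \{x_2\}$, $B_i \cap Y = A_i \cap Y$) have identical neighborhoods inside $H_0 \cup H_2$, because sliding the $X$-token of $A_i$ along either of its two available $Y$-edges lands on the same vertex regardless of which element of $X$ is occupied; and no other vertices of $H_1$ are twins. Hence the twin-equivalence classes of $H_1$ are precisely $\{\{A_i,B_i\} : 1 \le i \le \binom{n}{k-1}\}$, and since any automorphism preserves twin classes, $\varphi$ induces a permutation on this set of pairs. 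This gives the existence of $\pi_\varphi' \in S_{\binom{n}{k-1}}$ satisfying $b)$, once we know $\varphi(H_1) = H_1$ (for $k \neq \frac{n+2}{2}$ this follows from the Lemma on the action on $\{H_0,\dots,H_r\}$ since then $\varphi(H_1) \in \{H_1, H_{r-1}\}$ and $r-1 = 1$; in the remaining case $r=2$ as well, so $H_1 = H_{r-1}$ regardless).

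Next I would pin down $\pi_\varphi'$ in terms of $\pi_\varphi$ and the bit $[\varphi(H_0)=H_0]$. The key is that each pair $\{A_i, B_i\}$ is determined by its common neighborhood in $H_0$: namely $N(A_i) \cap H_0 = N(B_i) \cap H_0 = \{A_i \cap Y\}$-extended-by-one-$Y$-element, i.e. the two vertices of $H_0$ obtained by replacing the $x$-token with one of the $n-(k-1)$ unoccupied $Y$-vertices. More usefully, the set $A_i \cap Y \in \binom{Y}{k-1}$ is recovered as the intersection $\bigcap (N(A_i) \cap H_0)$ when $k-1 \geq 1$, or can be read off from which $Y(0,j)$ contain the neighbors. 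Since Corollary~\ref{cor:H_0-labelling} tells us exactly how $\varphi$ acts on $H_0$ (as $\pi_\varphi$ on indices, possibly composed with complementation/swap of sides), the image pair $\{A_{\pi_\varphi'(i)}, B_{\pi_\varphi'(i)}\}$ must be the unique twin-pair in $H_1$ whose common $H_0$-neighborhood is $\varphi(N(A_i)\cap H_0)$; working this out shows $\pi_\varphi'(i)$ is the index of the $(k-1)$-subset $\pi_\varphi(A_i \cap Y)$ (independently of whether $\varphi(H_0)=H_0$ or $H_r$, since both images of $H_0$ record the same underlying $\pi_\varphi$-permuted $(k-1)$-set). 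This establishes $a)$.

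Finally, to get $c)$, for each $i$ I would set $\alpha_\varphi := \{A_i \cap Y : \varphi(A_i) = B_{\pi_\varphi'(i)}\} \subseteq \binom{Y}{k-1}$; this is well-defined because $A_i \cap Y$ determines $i$, and $\varphi$ must send $A_i$ to one of the two elements $A_{\pi_\varphi'(i)}, B_{\pi_\varphi'(i)}$ of the image pair by $b)$. There is nothing further to prove: the three bullets are exactly the statement that $\varphi|_{H_1}$ is a "sign vector $\alpha_\varphi$ times a permutation $\pi_\varphi'$" acting as in the inclusion part (matching the automorphisms $\varphi_\alpha$ and $\psi_\pi$ constructed earlier). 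The main obstacle I anticipate is the bookkeeping in the case $k = \frac{n+2}{2}$, where the complement automorphism $\mathfrak{c}$ swaps $H_0 \leftrightarrow H_2$ but (as noted in the inclusions) commutes with all $\varphi_\alpha$ and $\psi_\pi$ and fixes $H_1$ setwise; one must check that composing with $\mathfrak{c}$ if necessary reduces to the case $\varphi(H_0) = H_0$ without disturbing the conclusions about $\pi_\varphi'$ and $\alpha_\varphi$, so that the lemma statement — which makes no reference to $\mathfrak{c}$ — still holds verbatim. A secondary subtlety is the boundary behavior when $k-1$ is close to $n$ (few unoccupied $Y$-vertices), where the common $H_0$-neighborhood of a pair is small; but $n > 2$ and $1 < k < n+1$ guarantee each pair has at least one $H_0$-neighbor, which is all that is needed to recover $A_i \cap Y$.
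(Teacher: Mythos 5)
Your proposal follows essentially the same route as the paper's proof: identify the pairs $\{A_i,B_i\}$ as the twin classes of $H_1$ (using $\varphi(H_1)=H_1$, which indeed holds because $r=2$ here), deduce $b)$, get $a)$ from the fact that the common $H_0$-neighborhood of a pair is carried by $\varphi$ to a set that Corollary~\ref{cor:H_0-labelling} determines from $\pi_\varphi$ and from whether $\varphi(H_0)=H_0$ or $H_r$, and define $\alpha_\varphi$ exactly as in $c)$. Two incidental claims in your write-up are inaccurate, though neither is fatal. First, the parenthetical assertion that $\pi_\varphi'(i)$ is always the index of $\pi_\varphi(A_i\cap Y)$, independently of the bit, is false: when $\varphi(H_0)=H_r$ (possible only for $k=\frac{n+2}{2}$) the induced map on $(k-1)$-sets is $S\mapsto Y\setminus\pi_\varphi(S)$; for instance $\varphi=\mathfrak{c}$ has $\pi_\varphi=\id$ yet sends the pair with $Y$-part $S$ to the pair with $Y$-part $Y\setminus S$. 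This does not damage $a)$, which only asserts dependence on $\pi_\varphi$ and the bit, and your main argument (the image pair is the unique pair whose common neighborhood in $\varphi(H_0)$ equals $\varphi(N(A_i)\cap H_0)$) already yields that. Second, your closing remark that one $H_0$-neighbor suffices to recover $A_i\cap Y$ fails at the boundary $k=n$: there $H_0=\{Y\}$ and every pair has the same unique $H_0$-neighbor, so the common $H_0$-neighborhood no longer distinguishes pairs (one would have to use the $H_2$-neighborhoods instead); the paper's own proof silently skips this same edge case, so it is not a divergence of approach. Finally, the anticipated need to compose with $\mathfrak{c}$ is unnecessary: the lemma accommodates $\varphi(H_0)=H_r$ directly through the case distinction, with no reduction to $\varphi(H_0)=H_0$.
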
 
	\begin{proof}
		Let $1 \le i< j \le \binom{n}{k-1}$. Note that $N(A_i)=N(B_i)$, while
		$N(A_i)\neq N(A_j)$, $N(B_i)\neq N(B_j)$ and $N(A_i)\neq N(B_j)$. Also, note that
		$\varphi(H_1)=H_1$. Let $C_{i_1},\ldots,C_{i_p}$ be the common neighbors of $A_i$ and $B_i$
		in $H_0$. Since $\varphi$ is an automorphism and
		$\varphi(H_1)=H_1$, there exist $1 \le i'\le \binom{n}{k-1}$ such that $A_{i'}$ and $B_{i'}$
		have $\varphi(C_{i_1}),\ldots,\varphi(C_{i_p})$ as common neighbors  in $\varphi(H_0)$.
		Therefore, $\varphi$ sends the pair $\{A_i,B_i\}$ to the pair $\{A_{i'},B_{i'}\}$.
		We set $\pi_\varphi'(i)=i'$. This proves $b)$.
		By Corollary~\ref{cor:H_0-labelling}, the values of $\varphi(C_{i_1}),\ldots,\varphi(C_{i_p})$ are  determined
		by $\pi_\varphi$ and  whether $\varphi(H_0)=H_0$ or $\varphi(H_0)=H_r$. Thus, $a)$ holds.
		Finally, to prove $c)$ we define $\alpha_\varphi$.
		If $\varphi(A_i)=A_{\pi_\varphi'(i)}$ and $\varphi(B_i)=B_{\pi_\varphi'(i)}$, then $A_i \cap Y \notin \alpha_\varphi$;
		and if $\varphi(A_i)=B_{\pi_\varphi'(i)}$ and $\varphi(B_i)=A_{\pi_\varphi'(i)}$, then $A_i \cap Y \in \alpha_\varphi$.
	\end{proof}
	
	\begin{lemma} \label{lem:m>2}
		Suppose that $m >2$. Then there exists $\sigma_\varphi \in S_m$, such that for every $1 \le i \le m$, the following hold.
		\begin{itemize}
			\item[$a)$] If $k<m=n$ or $m<n$, then 
			\[ \varphi(X(1,i))=\begin{cases}
				X(1,\sigma_\varphi(i)) & \textrm{ if } \varphi(H_0)=H_0,\\
				Y(r-1, \sigma_\varphi(i)) & \textrm{ if } \varphi(H_0)=H_r \textrm{ and }  k<m=n, \\
				\overline{X}(r-1,\sigma_\varphi(i)) & \textrm{ if }\varphi(H_0)=H_r, m < n \textrm{ and } k=\frac{m+n}{2}.
			\end{cases}
			\]
			
			\item[$b)$] If $k=m=n$, then
			\begin{itemize}
				\item $\varphi(X(1,i))$ is equal either to
				\[X(1,\sigma_\varphi(i)) \textrm{ or } \overline{Y}(1,\sigma_\varphi(i)) \textrm{ if } \varphi(H_0)=H_0, \]
				\[\overline{X}(r-1,\sigma_\varphi(i)) \textrm{ or } Y(r-1,\sigma_\varphi(i))  \textrm{ if } \varphi(H_0)=H_r.\]
				Moreover, exactly one of these two options holds for all $1 \le i \le m$.

				\item In addition there exists $\pi_\varphi \in S_n$ such that for every $1 \le j \le n$, 
				\[ \varphi(\overline{Y}(1,j))=\begin{cases}
					\overline{Y}(1,\pi_\varphi(j)) & \textrm{ if } \varphi(X(1,i))=X(1,\sigma_\varphi(i)) \textrm{ for all } 1 \le i \le m,\\
					X(1,\pi_\varphi(j)) & \textrm{ if } \varphi(X(1,i))=\overline{Y}(1,\sigma_\varphi(i)) \textrm{ for all } 1 \le i \le m,\\
					Y(r-1,\pi_\varphi(j)) & \textrm{ if } \varphi(X(1,i))=\overline{X}(r-1,\sigma_\varphi(i)) \textrm{ for all } 1 \le i \le m,\\
					\overline{X}(r-1,\pi_\varphi(j)) & \textrm{ if } \varphi(X(1,i))=Y(r-1,\sigma_\varphi(i)) \textrm{ for all } 1 \le i \le m.
				\end{cases}
				\]
			\end{itemize}
		\end{itemize}
	\end{lemma}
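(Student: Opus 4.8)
The plan is to treat separately the two regimes appearing in the statement: $k<m=n$ or $m<n$ (part $a)$), and $k=m=n>2$ (part $b)$). In both cases I would pin down the action of $\varphi$ on $H_1$ by feeding two kinds of information into each other: the action of $\varphi$ on $H_0$, which is already completely described by Corollary~\ref{cor:H_0-labelling} (together with Lemmas~\ref{lem:PQ} and~\ref{lem:permutation_sigma}), and numbers of common neighbours in $F_k(K_{m,n})$, which $\varphi$ must preserve. Recall that we already know $\varphi(H_1)=H_1$ if $\varphi(H_0)=H_0$, and $\varphi(H_1)=H_{r-1}$ if $\varphi(H_0)=H_r$.

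\textbf{Part $a)$.} For $T\in\binom{Y}{k-1}$ call $\mathcal F_T:=\{\,\{x_i\}\cup T:1\le i\le m\,\}$ the \emph{fibre over $T$}; the $\mathcal F_T$ partition $H_1$, and $X(1,i)=\bigcup_T\,(\{x_i\}\cup T)$ picks out the $i$-th member of every fibre. For $A=\{x_i\}\cup T\in H_1$ one checks directly that $N(A)\cap H_0=\{\,T\cup\{y\}:y\in Y\setminus T\,\}$, i.e.\ the set of elements of $H_0$ containing $T$. Since $\varphi$ acts on $H_0$ as in Corollary~\ref{cor:H_0-labelling}, comparing $\varphi(N(A)\cap H_0)=N(\varphi(A))\cap\varphi(H_0)$ with the analogous description of the $H_0$- (or $H_r$-) neighbourhood of $\varphi(A)$ forces $\varphi(A)$ to lie in a single fibre of $\varphi(H_1)$ (the fibre over $\pi_\varphi[T]$ when $\varphi(H_0)=H_0$, and in the $\varphi(H_0)=H_r$ cases the corresponding fibre of $H_{r-1}$, obtained by transporting $\pi_\varphi[T]$ from $Y$ to $X$, or by complementing, according to the relevant clause of Corollary~\ref{cor:H_0-labelling}). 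Thus $\varphi$ maps $\mathcal F_T$ bijectively onto that fibre, inducing a permutation $\sigma_{\varphi,T}\in S_m$. To see $\sigma_{\varphi,T}$ is independent of $T$, fix $T,T''\in\binom{Y}{k-1}$ with $|T\triangle T''|=2$: a short computation with symmetric differences shows that $\{x_i\}\cup T$ and $\{x_j\}\cup T''$ have $m$ common neighbours if $i=j$ and exactly $2$ if $i\ne j$. As $m\ge 3$ these numbers differ, and $\varphi$ preserves them, so $\varphi$ carries the canonical matching between $\mathcal F_T$ and $\mathcal F_{T''}$ to the one between their image fibres (which are again at symmetric difference $2$, since $\pi_\varphi$ preserves this), whence $\sigma_{\varphi,T}=\sigma_{\varphi,T''}$. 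Since the Johnson graph on $\binom{Y}{k-1}$ with edges between symmetric-difference-$2$ pairs is connected, $\sigma_{\varphi,T}$ is a single permutation $\sigma_\varphi$; taking unions over $T$ then yields $\varphi(X(1,i))=X(1,\sigma_\varphi(i))$, $Y(r-1,\sigma_\varphi(i))$, or $\overline{X}(r-1,\sigma_\varphi(i))$ according to the three sub-cases.

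\textbf{Part $b)$.} Now $H_0=\{Y\}$ and $H_r=\{X\}$ are single vertices, each joined to \emph{every} vertex of the adjacent level, so they carry no labelling information and the argument must be internal to $H_1$. Writing each $A\in H_1$ uniquely as $\{x_i\}\cup(Y\setminus\{y_j\})$ identifies $H_1$ with the grid $[m]\times[n]=[n]\times[n]$, with $X(1,i)$ the $i$-th row and $\overline{Y}(1,j)$ the $j$-th column (and similarly for $H_{r-1}$, whose rows and columns are the $\overline{X}(r-1,i)$ and the $Y(r-1,j)$). A direct count of common neighbours in $H_0\cup H_2$ shows that two distinct vertices in a common row or common column have exactly $n$ common neighbours, whereas every other pair has exactly $2$; since $n>2$, the relation ``$A\ne A'$ and $|N(A)\cap N(A')|>2$'' is an $F_k$-invariant graph on $H_1$ isomorphic to the rook's graph $R_{n,n}$. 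For $n\ge 3$ the maximal cliques of $R_{n,n}$ are precisely the $n$ rows and the $n$ columns; any two rows (or two columns) are disjoint, and a row meets a column in exactly one vertex. Hence $\varphi$ maps the family of rows and columns of $H_1$ bijectively onto that of $\varphi(H_1)$; since the $X(1,i)$ are pairwise disjoint their images are pairwise disjoint maximal cliques, hence all of the same type (``rows'' or ``columns''), which then forces the $\overline{Y}(1,j)$ to map to the complementary type. Reading off the permutation of the $X(1,i)$ gives $\sigma_\varphi$ and that of the $\overline{Y}(1,j)$ gives $\pi_\varphi$; the two independent binary choices, namely $\varphi(H_0)\in\{H_0,H_r\}$ and whether the $X(1,i)$ go to ``rows'' or ``columns'' of $\varphi(H_1)$, produce exactly the four displayed possibilities.

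\textbf{Where the difficulty lies.} The genuinely delicate part is $b)$: because $m=n$, rows and columns of the grid $H_1$ are indistinguishable to $F_k$, so the disjunction in the statement is unavoidable, and one must carefully track which of $\{\overline{X}(r-1,i)\}$, $\{Y(r-1,j)\}$ plays the role of ``rows'' of $H_{r-1}$ in each case — bookkeeping rather than a new idea, but easy to get wrong. In $a)$ the only point needing attention is that the separation ``$m$ versus $2$ common neighbours'' genuinely uses $m\ge 3$ (for $m=2$ both counts equal $2$, which is exactly why that case needs the separate treatment of Lemma~\ref{lem:m=2}), and that in the two $\varphi(H_0)=H_r$ sub-cases the target fibre must be read off from the correct clause of Corollary~\ref{cor:H_0-labelling}.
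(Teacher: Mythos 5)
Your proposal is correct in substance, but part $a)$ takes a genuinely different route from the paper's. The paper does not use the already-determined action of $\varphi$ on $H_0$ at this stage: it first proves, purely by counting common neighbours of pairs within a level (the counts $m-1$, $k-1$, $r-1$, $n-k+r-1$, or at most one, according to whether the $X$-parts or $Y$-parts agree), that for each $i$ the images of the vertices of $X(1,i)$ all share their $X$-part or all share their $Y$-part (its statement (\ref{eq:all}), obtained from a three-vertex argument plus connectivity of the ``symmetric difference two'' relation on $\binom{Y}{k-1}$), and then eliminates the wrong alternative in each sub-case by comparing these counts. You instead read the fibre coordinate of $\varphi(A)$ (its $Y$-part, $X$-part, or complemented $Y$-part) directly off $\varphi\bigl(N(A)\cap H_0\bigr)$ using Corollary~\ref{cor:H_0-labelling}, and use the common-neighbour counts ($m$ versus $2$, which I checked are right) only to synchronise the within-fibre index across fibres; this is shorter and exploits information the paper only brings in later, in Lemma~\ref{lem:m>2_A}. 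For part $b)$ the two arguments are much closer: your auxiliary ``rook's graph'' relation (more than two common neighbours) and its maximal cliques is a clean repackaging of the paper's (\ref{eq:all}), (\ref{eq:all_y}) combined with the intersection cardinalities (\ref{eq:same}) and (\ref{eq:dif}); the uniformity of the choice over $i$ and the complementary behaviour of the $\overline{Y}(1,j)$ come out the same way, and your verification that the relevant counts are $n$ versus $2$ both in $H_1$ and in $H_{r-1}$ is exactly what is needed for the case $\varphi(H_1)=H_{r-1}$.

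One caveat on part $a)$: your pinning step requires $N(A)\cap H_0=P_T$ to determine the image fibre, i.e.\ it needs $|P_{T'}|=n-k+1\ge 2$ in the first sub-case (the analogous inequalities in the two $\varphi(H_0)=H_r$ sub-cases do hold). It therefore gives nothing in the boundary case $m<n$, $k=n$, where $H_0=\{Y\}$ is a single vertex adjacent to every vertex of $H_1$; this case is formally within the hypothesis ``$k<m=n$ or $m<n$'', and the paper's counting argument still handles it (the mismatch $k-1\neq m-1$ of common-neighbour counts in $H_2$ yields the contradiction), whereas your argument would need either the reduction to $k\le (m+n)/2$ via the complement isomorphism or a rook-type argument as in your part $b)$. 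This is a boundary repair rather than a flaw in the method, but as written the step does fail there.
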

	\begin{proof}
		Let $A,B$ be distinct vertices either both in $H_1$ or both in $H_{r-1}$. If $A,B \in H_1$, then let $s=2$; otherwise, let $s=r-2$.
		First, we make some observations.
		\begin{itemize}
			\item Suppose that \[A \cap X \neq B \cap X \textrm{ and } A \cap Y \neq B \cap Y.\]
			\begin{itemize}
				\item If $|(A\cap X) \triangle (B \cap X)|>2$ or $|(A\cap Y) \triangle (B \cap Y)|>2$,
				then $A$ and $B$ have no common neighbor in $H_s$; and
				\item if $|(A\cap X) \triangle (B \cap X)|=2$ and $|(A\cap Y) \triangle (B \cap Y)|=2$,
				then $A$ and $B$ have exactly one common neighbor in $H_s$.
			\end{itemize}
			
			\item Suppose that \[A \cap X = B \cap X.\]
			\begin{itemize}
				\item If   $|(A\cap Y) \triangle (B \cap Y)|>2$,
				then $A$ and $B$ have no common neighbor in $H_s$.
				\item Suppose that $|(A\cap Y) \triangle (B \cap Y)|=2$. If $A,B \in H_1$, then
				$A$ and $B$ have exactly $m-1$ common neighbors in $H_2$; and if $A,B \in H_{r-1}$, then
				$A$ and $B$  have exactly $r-1$ common neighbors in $H_{r-2}$.
			\end{itemize}
			
			\item Suppose that \[A \cap Y = B \cap Y.\]
			\begin{itemize}
				\item If   $|(A\cap X) \triangle (B \cap X)|>2$,
				then $A$ and $B$ have no common neighbor in $H_s$.
				\item Suppose that $|(A\cap X) \triangle (B \cap X)|=2$. If $A,B \in H_1$, then
				$A$ and $B$ have exactly $k-1$ common neighbors in $H_2$; and if $A,B \in H_{r-1}$, then
				$A$ and $B$  have exactly $n-k+r-1$ common neighbors in $H_{r-2}$.
			\end{itemize}
		\end{itemize}
		
		Let  $A,B \in X(1,i)$ for some $1 \le i \le m$, with
		$|(A\cap Y) \triangle (B \cap Y)|=2$. Note that $A$ and $B$ have exactly $m-1$ common neighbors in $H_2$, and exactly one common neighbor in $H_0$.
		By the previous observations, we have that

		\begin{equation*} \varphi(A) \cap X =\varphi(B) \cap X \textrm{ or } \varphi(A) \cap Y = \varphi(B) \cap Y.  \end{equation*}
		
		Let $C \in X(1,i)$ such that $|(A\cap Y) \triangle (C \cap Y)|=2$ and $|(B\cap Y) \triangle (C \cap Y)|=2$.
		Suppose that \[\varphi(A) \cap X = \varphi(C) \cap X \textrm{ and } \varphi(B) \cap Y = \varphi(C) \cap Y.\]
		We have that $\varphi(A)\cap Y \neq \varphi(C)\cap Y=\varphi(B) \cap Y$ and $\varphi(B) \cap X \neq \varphi(C) \cap X =\varphi(A) \cap X.$
		If $\varphi(H_0)=H_0$ (resp. $\varphi(H_0)=H_r$), then  $\varphi(A)$ and $\varphi(B)$ have at most  one common neighbor in $H_2$ (resp. $H_{r-2}$) and $m \le 2$;
		which is a contradiction. Therefore,
		either \[\varphi(A) \cap X =\varphi(B) \cap X =\varphi(C) \cap X\]
		or \[\varphi(A) \cap Y =\varphi(B) \cap Y =\varphi(C) \cap Y.\]
		Let $A', B' \in \binom{Y}{k-1}$. It can be shown that there exists a sequence $A=C_1,\dots,C_l=B$ of elements in $\binom{Y}{k-1}$ such that for every
		$1 < j <l$, we have that $|C_{l-1}\triangle C_l|=|C_l \triangle C_{l+1}|=|C_{l-1} \triangle C_{l+1}|=2$.
		This implies that 
		\begin{equation}\label{eq:all} \varphi(A) \cap X =\varphi(D) \cap X \textrm{ for all } D \in X(1,i)
			\textrm{ or } 
			\varphi(A) \cap Y =\varphi(D) \cap Y \textrm{ for all } D \in X(1,i). 
		\end{equation}
		By similar arguments we have that if $k=m=n$ and $A \in \overline{Y}(1,j)$, then
		\begin{equation}\label{eq:all_y} \varphi(A) \cap X =\varphi(D) \cap X \textrm{ for all } D \in \overline{Y}(1,j)
			\textrm{ or } 
			\varphi(A) \cap Y =\varphi(D) \cap Y \textrm{ for all } D \in \overline{Y}(1,j).
		\end{equation}
		
		First we prove $a)$. Suppose that $k<m=n$ or $m<n$.
		\begin{itemize}
			\item Suppose that $\varphi(H_0)=H_0$. Suppose that $\varphi(A) \cap X\neq \varphi(B) \cap X.$
			Thus, $\varphi(A) \cap Y =\varphi(B) \cap Y$. We have that $\varphi(A)$ and $\varphi(B)$ 
			have exactly $k-1=m-1$ common neighbors in $H_{2}$, and exactly $n-(r-1)=1$ common neighbors in $H_{0}$. 
			This implies that $m=n=k$, a contradiction. By (\ref{eq:all}) we have that 
			\[\varphi(X(1,i))=X(1,\sigma_\varphi(i)),\]
			for some $1 \le \sigma_\varphi(i) \le m$.
			
			\item Suppose that $\varphi(H_0)=H_r$ and $k<m=n$.
			Suppose that $\varphi(A) \cap Y\neq \varphi(B) \cap Y.$
			Thus, $\varphi(A) \cap X =\varphi(B) \cap X$. We have that $\varphi(A)$ and $\varphi(B)$ 
			have exactly $r-1=m-1$ common neighbors in $H_{r-2}$. Thus, $r=m$, $r=\min\{m,k\}$ and $k<m$, a contradiction.
			By (\ref{eq:all}) we have that \[\varphi(X(1,i))= Y(r-1, \sigma_\varphi(i)),\]
			for some $1 \le \sigma_\varphi(i) \le m$.
			
			\item Suppose that $\varphi(H_0)=H_r$, $m< n$ and $k=\frac{m+n}{2}$.
			Suppose that $\varphi(A) \cap X\neq \varphi(B) \cap X.$
			Thus, $\varphi(A) \cap Y =\varphi(B) \cap Y$. We have that $\varphi(A)$ and $\varphi(B)$ 
			have exactly $n-k+r-1=m-1$ common neighbors in $H_{r-2}$, and exactly $k-(r-1)=1$ common neighbors in $H_{r}$. Thus, $n=m$, a contradiction.
			By  (\ref{eq:all}) we have that \[\varphi(X(1,i))= \overline{X}(r-1,\sigma_\varphi(i)),\]
			for some $1 \le \sigma_\varphi(i) \le m$.
		\end{itemize}
		
		Now we prove $b)$. Suppose that $k=m=n$. 
		By (\ref{eq:all}), we have that $\varphi(X(1,i))$  is equal to either
		\begin{equation} \label{eq:op}
			X(1,\sigma_\varphi(i)), \overline{Y}(1,\sigma_\varphi(i)), \overline{X}(r-1,\sigma_\varphi(i)) \textrm{ or } Y(r-1,\sigma_\varphi(i)),
		\end{equation}
		for some $1 \le \sigma_\varphi(i) \le m$. 
		For $1 \le i < j \le m$,
		we have  that
		\begin{equation}\label{eq:same}
			|X(1,i)\cap X(1,j)| = |\overline{X}(r-1,i)\cap \overline{X}(r-1,j)|= |\overline{Y}(1,i)\cap \overline{Y}(1,j)| = |Y(r-1,i)\cap Y(r-1,j)|=0,
		\end{equation}
		and
		\begin{equation}\label{eq:dif}
			|X(1,i)\cap \overline{Y}(1,j)| = |\overline{X}(r-1,i)\cap Y(r-1,j)|=1.
		\end{equation}
		Equalities (\ref{eq:same}) and (\ref{eq:dif}), and the fact that $\varphi(H_1)$ is equal either to $H_1$ or to $H_{r-1}$,  imply
		that exactly one of the options in (\ref{eq:op}) holds for all $1 \le i \le m$. 
		By (\ref{eq:all_y}) and equalities (\ref{eq:same}) and (\ref{eq:dif}), there exists $\pi_\varphi \in S_n$
		such that for every $1 \le j \le n$, 
		\[ \varphi(\overline{Y}(1,j))=\begin{cases}
			\overline{Y}(1,\pi_\varphi(j)) & \textrm{ if } \varphi(X(1,i))=X(1,\sigma_\varphi(i)) \textrm{ for all } 1 \le i \le m,\\
			X(1,\pi_\varphi(j)) & \textrm{ if } \varphi(X(1,i))=\overline{Y}(1,\sigma_\varphi(i)) \textrm{ for all } 1 \le i \le m,\\
			Y(r-1,\pi_\varphi(j)) & \textrm{ if } \varphi(X(1,i))=\overline{X}(r-1,\sigma_\varphi(i)) \textrm{ for all } 1 \le i \le m,\\
			\overline{X}(r-1,\pi_\varphi(j)) & \textrm{ if } \varphi(X(1,i))=Y(r-1,\sigma_\varphi(i)) \textrm{ for all } 1 \le i \le m.
		\end{cases}
		\]
	\end{proof}

	\begin{lemma} \label{lem:m>2_A}
		Suppose that $m >2$. If $k<m=n$ or $m<n$, then let $\pi_\varphi \in S_n$ be as in Lemma~\ref{lem:permutation_sigma}, and
		if $k=m=n$, then let $\pi_\varphi \in S_n$ be as in Lemma~\ref{lem:m>2}. Let $\sigma_\varphi$ be as in
		Lemma~\ref{lem:m>2}. Let $A \in H_1$.
		
		\begin{itemize}
			\item[$a)$] Suppose that $k<m=n$ or $m<n$. Then the value of $\varphi(A)$ depends only
			on $\pi_\varphi$, $\sigma_\varphi$ and whether $\varphi(H_0)$ is equal to $H_0$ or to $H_r$.
			
			\item[$b)$] Suppose that  $k=m=n$. Then the value of $\varphi(A)$ depends only
			on $\pi_\varphi$, $\sigma_\varphi$, whether $\varphi(H_0)$ is equal to $H_0$ or to $H_r$, and
			the two possibilities for $\varphi(X(1,1))$.
		\end{itemize}
	\end{lemma}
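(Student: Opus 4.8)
The plan is to pin down $\varphi(A)$, for $A\in H_1$, purely in terms of the data already recovered: the permutations $\pi_\varphi$ and $\sigma_\varphi$, whether $\varphi(H_0)=H_0$ or $\varphi(H_0)=H_r$, and — only when $k=m=n$ — which of the two options of Lemma~\ref{lem:m>2}$b)$ describes $\varphi(X(1,1))$. Before the main argument I would dispose of the small cases. Since $|X(1,i)|=\binom{n}{k-1}$, the set $X(1,i)$ is a singleton exactly when $k\in\{1,n+1\}$, and then $\{A\}=X(1,i)$, so $\varphi(A)$ is read directly from Lemma~\ref{lem:m>2}. For part $a)$ I would furthermore use the complement isomorphism $\mathfrak c\colon F_k(K_{m,n})\to F_{m+n-k}(K_{m,n})$ to reduce to the case $2k\le m+n$; combined with $k<m=n$ or $m<n$ this yields $2\le k\le n-1$, which is the case I treat below (so in particular $A$ has at least two neighbours in $H_0$).

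For part $a)$, write $A\cap X=\{x_i\}$ and $S:=A\cap Y$, so $1\le|S|=k-1\le n-2$. The key point is that the neighbourhood of $A$ inside $H_0$ is exactly $P_S$: a neighbour of $A$ in $H_0$ is obtained by sliding the token on $x_i$ to a free vertex of $Y$, hence is a $k$-subset of $Y$ containing $S$. By Corollary~\ref{cor:H_0-labelling} (which rests on Lemma~\ref{lem:PQ}), $\varphi$ restricted to $H_0$ is explicitly known, so $\varphi(N(A)\cap H_0)=\varphi(P_S)$ is a fully determined subset of $\varphi(H_0)$; concretely it is $P_{S'}$, $Q_{S'}$ or $\overline P_{S'}$ according to the three possibilities for $\varphi(H_0)$, where $S'$ is the $\pi_\varphi$-image of $S$ in the appropriate form and $|S'|=k-1$. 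A short common-neighbour count — using $|P_S|=n-k+1\ge 2$ and distinguishing vertices of $K_{m,n}$ by their part — shows that the vertices of $\varphi(H_1)$ adjacent to every element of $\varphi(N(A)\cap H_0)$ form an $m$-element set of the shape $\{S'\cup\{x_\ell\}:\ell\in[m]\}$ (and the analogous $\{S'\cup\{y_\ell\}:\ell\in[n]\}$ or $\{(X\setminus\{x_\ell\})\cup(Y\setminus S'):\ell\in[m]\}$ in the other two cases). Intersecting this set with $\varphi(X(1,i))$, which by Lemma~\ref{lem:m>2}$a)$ equals $X(1,\sigma_\varphi(i))$, $Y(r-1,\sigma_\varphi(i))$ or $\overline X(r-1,\sigma_\varphi(i))$, leaves a single vertex; hence $\varphi(A)$ is determined by $\pi_\varphi$, $\sigma_\varphi$ and the case for $\varphi(H_0)$, as claimed in $a)$.

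For part $b)$ ($k=m=n$), a vertex $A\in H_1$ has $A\cap X=\{x_i\}$ and $A\cap Y=Y\setminus\{y_j\}$ for a unique $j$, so $\{A\}=X(1,i)\cap\overline Y(1,j)$. By Lemma~\ref{lem:m>2}$b)$, once we know whether $\varphi(H_0)=H_0$ or $\varphi(H_0)=H_r$ and which of its two options applies to $\varphi(X(1,1))$ (equivalently, to all $X(1,i)$), the set $\varphi(X(1,i))$ is one explicit family indexed by $\sigma_\varphi(i)$, and correspondingly $\varphi(\overline Y(1,j))$ is the matching family indexed by $\pi_\varphi(j)$. Therefore $\varphi(A)=\varphi(X(1,i))\cap\varphi(\overline Y(1,j))$ is a single, completely determined vertex — for instance $\{x_{\sigma_\varphi(i)}\}\cup(Y\setminus\{y_{\pi_\varphi(j)}\})$ when $\varphi(X(1,i))=X(1,\sigma_\varphi(i))$ — which proves $b)$.

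The main obstacle I anticipate is the bookkeeping: in part $a)$ one must run the common-neighbour computation separately for each of the three forms of $\varphi(H_0)$, checking in each case that the common neighbours of $\varphi(P_S)$ in $\varphi(H_1)$ are precisely the stated $m$-element family; and in part $b)$ one must keep track of which of the four families $\varphi(X(1,i))$ and $\varphi(\overline Y(1,j))$ fall into and confirm the intersection is as claimed. Both amount to routine but somewhat lengthy degree and intersection counts in the token graph of $K_{m,n}$.
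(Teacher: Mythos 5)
Your argument is correct and is essentially the paper's own proof: for $a)$ you recover one part of $\varphi(A)$ from Lemma~\ref{lem:m>2} via $\sigma_\varphi$ and the remaining part from $\varphi(N(A)\cap H_0)=\varphi(P_{A\cap Y})$, which Corollary~\ref{cor:H_0-labelling} determines via $\pi_\varphi$ and the $H_0$/$H_r$ alternative; for $b)$ you use $\{A\}=X(1,i)\cap\overline{Y}(1,j)$ together with both statements of Lemma~\ref{lem:m>2}$b)$, exactly as in the paper. The only quibble is your preliminary complementation step: conjugating by $\mathfrak{c}$ sends $H_1$ of $F_k(K_{m,n})$ to $H_{m-1}$ of $F_{m+n-k}(K_{m,n})$ rather than to $H_1$ (and when $k<m=n$ the complementary parameter violates the hypothesis of $a)$), so it does not literally reduce the statement to itself; it is also unnecessary, since the surrounding lemmas (e.g.\ Lemma~\ref{lem:PQ}) already operate under the implicit restriction $k\le n-1$, which is precisely the range your main argument covers.
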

	\begin{proof}
		Let \[A \cap X=:\{x_i\}\ \textrm{ and } A \cap Y =:\{y_{t_1},\dots y_{t_{k-1}}\}. \]
		
		\begin{itemize} 
			
			\item[$a)$] Suppose that $k<m=n$ or $m<n$. 
			
			By Lemma~\ref{lem:m>2}, we have  that 
			\[\varphi(A) \cap X = \begin{cases} 
				\{x_{\sigma_\varphi(i)}\} & \textrm{ if } \varphi(H_0)=H_0, \\
				X \setminus \{x_{\sigma_\varphi(i)}\} & \textrm{ if }\varphi(H_0)=H_r, m < n \textrm{ and } k=\frac{m+n}{2},
			\end{cases}
			\]
			and 
			\[\varphi(A) \cap Y=\{y_{\sigma_\varphi(i)} \} \textrm{ if } \varphi(H_0)=H_r \textrm{ and }  k<m=n.\]
			Note that \[N(A)\cap H_0 = \left \{\{y_j,y_{t_1},\dots,y_{t_{k-1}}\}: j \in [m] \setminus\{t_1,\dots t_{k-1}\} \right \}.\]
			By Corollary~\ref{cor:H_0-labelling}, we have that $\varphi\left (N(A)\cap H_0 \right )$ is equal to
			\[\begin{cases}
				\left  \{ \{y_{\pi_\varphi(j)},y_{\pi_\varphi(t_1)},\ldots,y_{\pi_\varphi(t_{k-1})}\} :j \in [m] \setminus\{t_1,\dots t_{k-1} \} \right \} & \text{if $\varphi(H_0)=H_0$;} \\
				\left  \{ \{x_{\pi_\varphi(j)},x_{\pi_\varphi(t_1)},\ldots,x_{\pi_\varphi(t_{k-1})}\} :j \in [m] \setminus\{t_1,\dots t_{k-1} \} \right \}  & \text{if $\varphi(H_0)=H_r$  and $k<m=n$; and} \\
				Y \setminus \left  \{ \{y_{\pi_\varphi(j)},y_{\pi_\varphi(t_1)},\ldots,y_{\pi_\varphi(t_{k-1})}\} :j \in [m] \setminus\{t_1,\dots t_{k-1} \} \right \} & \text{if $\varphi(H_0)=H_r$, $m<n$ and $k=\frac{m+n}{2}$.}
				
			\end{cases}
			\]
			Therefore, 
			\[\varphi(A) \cap Y = \begin{cases} 
				\left \{y_{\pi_\varphi(t_1)},\ldots,y_{\pi_\varphi(t_{k-1})} \right \} & \textrm{ if } \varphi(H_0)=H_0, \\
				Y \setminus\left  \{y_{\pi_\varphi(t_1)},\ldots,y_{\pi_\varphi(t_{k-1})} \right \} & \textrm{ if }\varphi(H_0)=H_r, m < n \textrm{ and } k=\frac{m+n}{2},
			\end{cases}
			\]
			and 
			\[\varphi(A) \cap X= \{x_{\pi_\varphi(t_1)},\ldots,x_{\pi_\varphi(t_{k-1})}\} \textrm{ if } \varphi(H_0)=H_r \textrm{ and }  k<m=n.\]
			
			\item[$b)$] Suppose that  $k=m=n$.
			
			Let $\{j\}:=[m]\setminus \{t_1,\dots,t_{k-1}\}$.
			By Lemma~\ref{lem:m>2}, we have  that 
			\[\varphi(A) \cap X = \begin{cases} 
				\{x_{\sigma_\varphi(i)}\} & \textrm{ if } \varphi(H_0)=H_0 \textrm{ and } \varphi(X(1,1))=X(1,\sigma_\varphi(1)), \\
				\{x_{\pi_\varphi(j)}\} & \textrm{ if } \varphi(H_0)=H_0 \textrm{ and } \varphi(X(1,1))=\overline{Y}(1,\sigma_\varphi(1)), \\
				X \setminus \{x_{\sigma_\varphi(i)}\} & \textrm{ if } \varphi(H_0)=H_r \textrm{ and } \varphi(X(1,1))=\overline{X}(r-1,\sigma_\varphi(1)),\\
				X \setminus \{x_{\pi_\varphi(j)}\} & \textrm{ if } \varphi(H_0)=H_r \textrm{ and } \varphi(X(1,1))=Y(r-1,\sigma_\varphi(1)),
			\end{cases}
			\]
			and
			\[\varphi(A) \cap Y = \begin{cases} 
				Y \setminus \{y_{\pi_\varphi(j)}\} & \textrm{ if } \varphi(H_0)=H_0 \textrm{ and } \varphi(X(1,1))=X(1,\sigma_\varphi(1)), \\
				Y \setminus \{y_{\sigma_\varphi(i)}\} & \textrm{ if } \varphi(H_0)=H_0 \textrm{ and } \varphi(X(1,1))=\overline{Y}(1,\sigma_\varphi(1)), \\
				\{y_{\pi_\varphi(j)}\} & \textrm{ if } \varphi(H_0)=H_r \textrm{ and } \varphi(X(1,1))=\overline{X}(r-1,\sigma_\varphi(1)),\\
				\{y_{\sigma_\varphi(i)}\} & \textrm{ if } \varphi(H_0)=H_r \textrm{ and } \varphi(X(1,1))=Y(r-1,\sigma_\varphi(1)).
			\end{cases}
			\]
		\end{itemize}
	\end{proof}

	\subsubsection*{Exact upper bound for $|\aut(F_k(K_{m,n}))|$}
	
	First, we show that the value of $\varphi$ on $H_0$ and $H_1$ determine its value on 
	all the vertices of $F_k(K_{m,n})$. Afterwards, we use this information and the previous
	lemmas to obtain an upper bound on $|\aut(F_k(K_{m,n}))|$, that matches the lower bound
	implied by the subgroups of $\aut(F_k(K_{m,n}))$ shown in Section~\ref{sec:inc}. This completes
	the proof of Theorem~\ref{thm:bipartite}.

	\begin{lemma}\label{lem:H_i}
		Let $2 \le i \le r$. Then the images of the vertices in $H_i$ under $\varphi$ are determined by the images under $\varphi$
		of the vertices in $H_{i-1}$.
	\end{lemma}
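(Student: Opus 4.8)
The plan is to prove Lemma~\ref{lem:H_i} by induction on $i$, using the fact that every vertex in $H_i$ with $2 \le i \le r$ lies on a short cycle or is otherwise pinned down by its neighbours in the two levels $H_{i-1}$ and $H_{i+1}$ (or, at the top, $H_{r-1}$), both of which are handled by the inductive hypothesis once we know $H_0$ and $H_1$. Concretely, fix $A \in H_i$ and write $A \cap X = \{x_{j_1},\dots,x_{j_i}\}$ and $A \cap Y = \{y_{t_1},\dots,y_{t_{k-i}}\}$. First I would record, as in the proof of Lemma~\ref{lem:m>2}, the combinatorial invariant distinguishing pairs of vertices in the same level: two vertices $A, A' \in H_i$ that differ by swapping one element of $X$ (resp.\ one element of $Y$) have a number of common neighbours in $H_{i-1}$ and in $H_{i+1}$ that is determined by $m,n,k,i$ and, crucially, differs according to whether the swap was inside $X$ or inside $Y$. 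This lets us detect, from the images of $A$'s neighbours in $H_{i-1}$, whether $\varphi(A)\cap X$ or $\varphi(A)\cap Y$ agrees across an $X$-neighbourhood, exactly mirroring equations~(\ref{eq:all}) and (\ref{eq:all_y}).

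The main step is then a "two coordinates determine the point" argument. A vertex $A\in H_i$ is the unique common element of the sets $X(i,j)$ for $x_j \in A\cap X$ and $Y(i,t)$ for $y_t \in A\cap Y$; equivalently, $\{A\} = \bigcap_{j} X(i,j) \cap \bigcap_{t} \overline{X}(i,j')$ over the appropriate indices, but it is cleaner to use neighbours one level down. Pick any neighbour $C \in H_{i-1}$ of $A$ obtained by sliding a token off some $x_{j_\ell}$; by the inductive hypothesis $\varphi(C)$ is known, and it is a vertex of $H_{i-1}$ (or $H_{r-i+1}$). The neighbourhood $N(C) \cap H_i$ consists of all vertices of $H_i$ obtained from $C$ by adding back a token to $X$, and among these, $A$ is singled out as the unique one whose intersection with $N(C')\cap H_i$ (for a second, suitably chosen neighbour $C' \in H_{i-1}$ of $A$, also known under $\varphi$) is exactly $\{A\}$. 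So $\{\varphi(A)\} = \varphi(N(C)\cap H_i) \cap \varphi(N(C')\cap H_i) = (N(\varphi(C))\cap H_i)\cap(N(\varphi(C'))\cap H_i)$, and since $\varphi(C),\varphi(C')$ are determined, so is $\varphi(A)$. The one place this needs care is the extreme levels: when $i=r$ there is no $H_{i+1}$, and when $i = r$ with $k = m = n$ or $k=(m+n)/2$ the level $H_r$ may be a single vertex or may coincide with an endpoint of the path $H_0,\dots,H_r$; in those degenerate cases $H_r$ has $|H_r|=1$ or is handled symmetrically via $\mathfrak{c}$ and the statement is immediate. I would also need the base case $i=2$, but that is just the $i$-th inductive step with the hypothesis "$H_1$ is determined", which is exactly what is given.

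The hard part will be bookkeeping the case analysis cleanly: the four regimes ($\varphi(H_0)=H_0$; $\varphi(H_0)=H_r$ with $k<m=n$; $\varphi(H_0)=H_r$ with $m<n$ and $k=(m+n)/2$; and the fully symmetric $k=m=n$) each send $H_i$ to either $H_i$ or $H_{r-i}$, and under the complement-type identifications the roles of $X$ and $Y$ and of "contains" versus "avoids" get swapped, so the intersection-of-neighbourhoods identity has to be written in a form that is visibly invariant under these swaps. I would phrase the whole argument at the level of "the vertex $A$ is the unique vertex of its level that is adjacent to both $C$ and $C'$ and has no other common neighbour pattern", a property preserved by any graph automorphism, so that I never have to name the image level explicitly — this dodges the case analysis entirely and makes the induction go through uniformly. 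Combined with Lemmas~\ref{lem:m=1}, \ref{lem:m=2}, \ref{lem:m>2} and \ref{lem:m>2_A} (which pin down $\varphi$ on $H_0 \cup H_1$ in terms of finitely many discrete choices), Lemma~\ref{lem:H_i} then yields the desired upper bound on $|\aut(F_k(K_{m,n}))|$, matching the lower bound from Section~\ref{sec:inc}.
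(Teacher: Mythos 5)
Your proposal is correct and takes essentially the same route as the paper: both arguments determine $\varphi$ on $H_i$ from $\varphi$ on $H_{i-1}$ by a local uniqueness claim and then induct down to the base case $H_1$. The only difference is the mechanism — the paper shows distinct vertices of $H_i$ have distinct neighborhoods in $H_{i-1}$ by exhibiting a neighbor of one not adjacent to the other, whereas you pin each $A\in H_i$ down as the unique common neighbor in its level of two suitably chosen vertices $C,C'\in H_{i-1}$ (which does require choosing the two destination vertices $y\neq y'$ in $Y\setminus A$, a condition of the same kind as the paper's own choice of $y\in Y\setminus(B\cup\{a\})$).
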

	\begin{proof}
		We aim to show the following slightly stronger result: for any two distinct vertices $A, B\in H_i$, we have
		\begin{equation}
			\label{eq:neighborhoods}
			N(A)\cap H_{i-1}\ne N(B)\cap H_{i-1}.
		\end{equation}
		It is straightforward to see that if~\eqref{eq:neighborhoods} holds, then the lemma follows. 
		Observe that each vertex in $H_i$ has at least one neighbor in $H_{i-1}$. 
		Clearly, if $d(A,B)>2$, $N(A)\cap N(B)=\emptyset$, and then~\eqref{eq:neighborhoods} trivially holds. 
		Assume then that $d(A,B)=2$, or equivalently, that $|A\triangle B|=2$. Let $R:=A\cap B$, and let 
		$a,b\in V(K_{m,n})$ such that $A=R\cup \{a\}$ and $B=R\cup \{b\}$. Since $A,B\in H_i$, we have 
		either $\{a,b\}\subseteq X$ or $\{a,b\}\subseteq Y$. This observation and the facts that $i\ge 2$ and $k\le |Y|$
		together imply the existence of two vertices $x\in X\cap A\cap B$ and $y\in Y\setminus (A\cup B)$. 
		Let $A':=(A\setminus \{x\})\cup \{y\}$. Observe that $A'\in H_{i-1}$, and further, $A'\in N(A)$, 
		meanwhile $A'\notin N(B)$ because $A'\triangle B=\{a,b,x,y\}$. The result follows. 
	\end{proof}

	\begin{lemma}\label{lem:bipartite_upper_bound}
		Let $m \le n$ be positive integers with $(m,n) \neq (2,2)$. Then
		\ {}
		\begin{itemize}
			\item[a)] for all $n >2$ and $1 < k< n+1$,
			\[ |\aut(F_k(K_{2,n}))| \le \begin{cases} 
				\left |\mathbb{Z}_2 \wr_{\binom{[n]}{k-1}} S_n \right | & \textrm{ if } k\neq \frac{n+2}{2}, \\
				\left |  \mathbb{Z}_2 \wr_{\binom{[n]}{k-1}} (S_n\times \mathbb{Z}_2)  \right | & \textrm{ otherwise; and} \\
			\end{cases}
			\]

			\item[b)] for all $m \neq 2$,
			\[ |\aut(F_k(K_{m,n}))| \le \begin{cases}
				|\aut(K_{m,n})| & \textrm{ if } k\neq \frac{n+m}{2}; \\
				|\aut(K_{m,n}) \times \mathbb{Z}_2|  & \textrm{ otherwise.}
			\end{cases}\]

		\end{itemize}
	\end{lemma}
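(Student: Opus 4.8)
The plan is to bound $|\aut(F_k(K_{m,n}))|$ by counting how many automorphisms $\varphi$ can exist, using the structural results already established. The key observation is that $\varphi$ is completely determined once we know (i) the action of $\varphi$ on $\{H_0,\dots,H_r\}$—equivalently, whether $\varphi(H_0)=H_0$ or $\varphi(H_0)=H_r$ (only relevant when $k=\frac{m+n}{2}$ or $m=n$, by the first lemma); (ii) the permutation $\pi_\varphi\in S_n$ (and $\sigma_\varphi\in S_m$ when $m>2$); and (iii) in the special cases $m=2$ or $k=m=n$, a small amount of extra data. Indeed, Corollary~\ref{cor:H_0-labelling} says $\varphi$ on $H_0$ is determined by $\pi_\varphi$ and the $H_0$-vs-$H_r$ choice; Lemmas~\ref{lem:m=1}, \ref{lem:m=2}, and \ref{lem:m>2_A} then say $\varphi$ on $H_1$ is determined by this same data (plus $\alpha_\varphi$ when $m=2$, plus the two-valued choice for $\varphi(X(1,1))$ when $k=m=n$); and finally Lemmas~\ref{lem:H_i} propagates this determination to all of $H_2,\dots,H_r$ by induction. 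So the whole of $\varphi$ is pinned down by a bounded amount of combinatorial data, and I just need to count the possibilities.

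For part $b)$ ($m\neq 2$), I would split into cases. If $k\neq(n+m)/2$ and $m\neq n$, the $H_0$-vs-$H_r$ choice is forced to $\varphi(H_0)=H_0$, so $\varphi$ is determined by $\pi_\varphi\in S_n$ and $\sigma_\varphi\in S_m$ (when $m>2$; when $m=1$ only $\pi_\varphi$), giving at most $|S_m\times S_n|=|\aut(K_{m,n})|$ automorphisms. When $k=(n+m)/2$ and $m\neq n$, there are two choices for the $H_0$/$H_r$ assignment, doubling the count to $|\aut(K_{m,n})\times\mathbb Z_2|$—here one must check the extra factor of $2$ is exactly accounted for (the new choice is realized by $\mathfrak c$), not more. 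When $m=n$ (with $k\neq m=n$, so $k\neq(n+m)/2$), the two $H_0$/$H_r$ choices correspond to the $\mathbb Z_2$ in $\aut(K_{n,n})=S_n\times S_n\rtimes\mathbb Z_2$, so again the total is $|\aut(K_{m,n})|$. The only genuinely delicate subcase of $b)$ is $k=m=n$: here $|H_0|=|H_r|=1$, so Corollary~\ref{cor:H_0-labelling} gives no information about $\pi_\varphi$, and the data determining $\varphi$ is $\pi_\varphi\in S_n$, $\sigma_\varphi\in S_m=S_n$, the $H_0$/$H_r$ choice, and the binary choice for $\varphi(X(1,1))$; naively that is $2\cdot 2\cdot(n!)^2$, which overcounts, so one must show the binary choice and the $H_0/H_r$ choice are not independent of the rest, collapsing the count back to $2(n!)^2=|\aut(K_{n,n})|$ (note $F_k(K_{n,n})=F_k$ with $k=n$ means $k=n$, but $k\le n-1$ throughout, so in fact this subcase is vacuous for $F_k(K_{m,n})$ with $k<n$—I should double-check which of these $m=n$ subcases actually occur, since $k\in\{1,\dots,n+m-1\}$ and $r=\min\{m,k\}$).

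For part $a)$ ($m=2$), Lemma~\ref{lem:m=2} tells us $\varphi$ is determined by $\pi_\varphi\in S_n$ (which via part $a)$ of that lemma determines $\pi_\varphi'$) together with $\alpha_\varphi\in 2^{\binom{Y}{k-1}}$—and, when $k=\frac{n+2}{2}$, the $H_0$/$H_r$ choice. Since $\pi_\varphi$ ranges over $S_n$ (an $n!$-sized set) and $\alpha_\varphi$ over a set of size $2^{\binom{n}{k-1}}$, we get at most $n!\cdot 2^{\binom{n}{k-1}}=|\mathbb Z_2\wr_{\binom{[n]}{k-1}}S_n|$ automorphisms when $k\neq\frac{n+2}{2}$, and twice that when $k=\frac{n+2}{2}$. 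The main obstacle throughout is making the "determined by" claims into honest upper bounds: one must verify that distinct automorphisms really do yield distinct tuples of data (so that the count is an upper bound and not just a heuristic), and in the borderline cases check that the various binary choices are either forced or already subsumed, so the arithmetic lands exactly on the stated group orders rather than a multiple of them. I would handle this by, in each case, exhibiting an injection from $\aut(F_k(K_{m,n}))$ into the relevant data set and arguing its cardinality directly.
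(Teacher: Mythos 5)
Your counting strategy is the same as the paper's: by Corollary~\ref{cor:H_0-labelling}, Lemmas~\ref{lem:m=1}, \ref{lem:m=2}, \ref{lem:m>2_A} and the propagation Lemma~\ref{lem:H_i}, an automorphism $\varphi$ is pinned down by $\pi_\varphi$, $\sigma_\varphi$, $\alpha_\varphi$ (when $m=2$), the choice $\varphi(H_0)\in\{H_0,H_r\}$ (available only when $k=(m+n)/2$ or $m=n$), and the binary choice of $\varphi(X(1,1))$ when $k=m=n$; counting these data gives the stated bounds, and your handling of part a) and of the subcases $m<n$ and $k<m=n$ of part b) agrees with the paper.

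The genuine problem is your treatment of the subcase $k=m=n$. First, it is not vacuous: the host graph $K_{m,n}$ has order $m+n$, so $k$ ranges over $\{1,\dots,m+n-1\}$ and, for instance, $F_3(K_{3,3})$ is a legitimate instance; your remark that ``$k\le n-1$ throughout'' confuses the order of the graph with the size $n$ of one side of the bipartition. Second, since $k=m=n$ forces $k=(m+n)/2$, the bound to be proved in this subcase is $\left|\aut(K_{n,n})\times\mathbb{Z}_2\right|=4(n!)^2$, not $\left|\aut(K_{n,n})\right|=2(n!)^2$. Hence your ``naive'' count $2\cdot 2\cdot(n!)^2$ is already exactly the required bound and no collapsing argument is needed; worse, the collapse you propose to $2(n!)^2$ cannot be carried out, because the inclusions of Section~\ref{sec:inc} (the image of $\aut(K_{n,n})$ under $\iota$ together with the complement automorphism $\mathfrak{c}$) already produce $4(n!)^2$ automorphisms when $k=(m+n)/2$, so an upper bound of $2(n!)^2$ would be false. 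The fix is simply to place this subcase on the correct side of the case distinction $k=(n+m)/2$ versus $k\neq(n+m)/2$, as the paper does; with that correction your argument coincides with the paper's proof.
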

	\begin{proof}		
		Let $\varphi \in \aut(F_k(K_{m,n}))$. If $k<m=n$ or $m<n$, then let $\pi_\varphi \in S_n$ be as in Lemma~\ref{lem:permutation_sigma}, and
		if $k=m=n$, then let $\pi_\varphi \in S_n$ be as in Lemma~\ref{lem:m>2}. Let $\sigma_\varphi \in S_m$ be as in Lemma~\ref{lem:m>2}.
		Finally, if $m=2$, let $\alpha_\varphi \in 2^{\binom{Y}{k-1}}$ be as in Lemma~\ref{lem:m=2}.
		Note that there are: $n!$ possibilities for $\pi_\varphi$, $m!$ possibilities for $\sigma_\varphi$, and  $2^{\binom{n}{k-1}}$ possibilities
		for $\alpha_\varphi$. Moreover if $k\neq \frac{n+m}{2}$, then $\varphi(H_0)=H_0$; and
		if $k= \frac{n+m}{2}$, then $\varphi(H_0)=H_0$ or $\varphi(H_0)=H_r$. 
		
		\begin{itemize}
			
			\item[$a)$] Suppose that $m=2,n >2$ and $1 < k< n+1$. 
			
			By Corollary~\ref{cor:H_0-labelling}, the image
			under $\varphi$  of every vertex in $H_0$, depends only on $\pi_\varphi$ and whether $\varphi(H_0)$ equals $H_0$ or $H_2$. 
			By Lemma~\ref{lem:m=2}, the image under $\varphi$ of every vertex in $H_1$ depends on $\alpha_\varphi$, $\pi_\varphi$ and whether $\varphi(H_0)$ equals $H_0$ or $H_2$. 
			By Lemma~\ref{lem:H_i}, the image under $\varphi$ of every vertex in $H_2$, depends only on the images under $\varphi$
			of the vertices in $H_1$. Therefore,
			\[ |\aut(F_k(K_{2,n}))| \le \begin{cases} 
				n! \cdot 2^{\binom{n}{k-1}}=\left |\mathbb{Z}_2 \wr_{\binom{[n]}{k-1}} S_n \right | & \textrm{ if } k\neq \frac{n+2}{2}, \\
				n! \cdot 2 \cdot 2^{\binom{n}{k-1}}=\left |  \mathbb{Z}_2 \wr_{\binom{[n]}{k-1}} (S_n\times \mathbb{Z}_2) \right | & \textrm{ otherwise. } \\
			\end{cases}
			\]
			
			\item[$b)$] Suppose $m \neq 2$.
			
			Recall that 
			\[|\aut(K_{m,n})|=\begin{cases}
				\left  |S_m \times S_n \right |=m! \cdot n! & \textrm{ if } m \neq n,\\
				\left  |S_m \times S_n \rtimes \mathbb{Z}_2 \right |=m! \cdot n! \cdot 2 & \textrm{ if } m=n.
			\end{cases}\]
			
			By Corollary~\ref{cor:H_0-labelling} we have the following. If $k=m=n$ the image
			under $\varphi$  of every vertex in $H_0$, depends only on whether $\varphi(H_0)$ equals $H_0$ or $H_r$; and
			if $k<m=n$ or $m<n$ the image
			under $\varphi$  of every vertex in $H_0$, depends only on $\pi_\varphi$ and whether $\varphi(H_0)$ equals $H_0$ or $H_r$. 
			
			By Lemma~\ref{lem:m>2_A}, we have the following. If $k<m=n$ or $m<n$, 
			then the value of every vertex in $H_1$ depends only
			on $\pi_\varphi$, $\sigma_\varphi$ and whether $\varphi(H_0)$ is equal to $H_0$ or to $H_r$;
			if $k=m=n$, then the value of every vertex in $H_1$ depends only
			on $\pi_\varphi$, $\sigma_\varphi$, whether $\varphi(H_0)$ is equal to $H_0$ or to $H_r$, and
			the two possibilities for $\varphi(X(1,1))$. By Lemmas~\ref{lem:H_i} and Corollary~\ref{cor:H_0-labelling}, we have that 
			\[ |\aut(F_k(K_{m,n}))| \le \begin{cases}
				m! \cdot n!= |\aut(K_{m,n})| & \textrm{if } k\neq (n+m)/2 \textrm{ and } m<n , \\
				m! \cdot n! \cdot 2= |\aut(K_{m,n})| & \textrm{if } k\neq (n+m)/2 \textrm{ and } m=n , \\
				m! \cdot n! \cdot 2= |\aut(K_{m,n}) \times \mathbb{Z}_2| & \textrm{if } k= (n+m)/2   \textrm{ and } m<n, \\
				m! \cdot n! \cdot 2 \cdot 2= |\aut(K_{m,n}) \times \mathbb{Z}_2| & \textrm{if } k= (n+m)/2   \textrm{ and } m=n. \\
			\end{cases}\]
		\end{itemize}
	\end{proof}

	\paragraph{Acknowledgments } \-\ 
	
	We thank Carlos Hidalgo-Toscano and Irene Parada for various helpful discussions.

	\small \bibliographystyle{abbrv} 
	\bibliography{automorphisms}

\end{document}